\providecommand{\U}[1]{\protect\rule{.1in}{.1in}}
\newtheorem{theorem}{Theorem}[section]
\newtheorem{corollary}[theorem]{Corollary}
\newtheorem{definition}[theorem]{Definition}
\newtheorem{assumption}[theorem]{Assumption}
\newtheorem{lemma}[theorem]{Lemma}
\newtheorem{remark}[theorem]{Remark}
\newenvironment{proof}[1][Proof]{\noindent \textbf{#1.} }{\  \rule{0.5em}{0.5em}}
\numberwithin{equation}{section}
\begin{document}

\title{Solvability of one kind of forward-backward stochastic difference equations}
\author{Shaolin Ji\thanks{Shandong university-Zhongtai Securities Institute for
Financial Studies, Shandong University, Jinan, Shandong, 250100, China, Email:
jsl@sdu.edu.cn. This work was supported by National Key R\&D Program of China
(NO. 2018YFA0703900) and National Natural Science Foundation of China (No.
11971263; 11871458).}
\and Haodong Liu\thanks{School of economics, Ocean University of China, Qingdao,
Shandong 266100, PR China. (liuhaodong@ouc.edu.cn). } }
\date{}
\maketitle

\textbf{Abstract}: In this paper, we study the solvability problem for one
kind of fully coupled forward-backward stochastic difference equations
(FBS$\Delta$Es). With the help of the necessary and sufficient condition for
the solvability of the linear FBS$\Delta$Es, under the monotone assumption, we
obtain the existence and uniqueness theorem for the general nonlinear ones.

{\textbf{Keywords}: forward-backward stochastic difference equations;
martingale representation theorem; }continuation method

\addcontentsline{toc}{section}{\hspace*{1.8em}Abstract}

\section{Introduction}

It is well-known that forward-backward stochastic differential equations
(FBSDEs) are widely studied by many researchers and there are fruitful results
in both theory and applications (see \cite{hp95} and \cite{mpy94, mwzz15,
my93, pt99}).

As the discrete time counterpart of FBSDEs, FBS$\Delta$Es also have wide
applications in many areas, such as discrete time stochastic optimal control
theory, mathematical finance, numerical calculation, etc. For instance, the
Hamiltonian system of a discrete time stochastic optimal control problem is a
FBS$\Delta$E (see \cite{lz15}); The FBS$\Delta$E can also be regarded as the
state equation for a discrete time recursive utility optimization problem
since some discrete time nonlinear expectations are defined by backward
stochastic difference equations (BS$\Delta$Es) (see \cite{esc15}). Moreover,
when we study the numerical solution of a FBSDE, we usually obtain a
FBS$\Delta$E through time discretization (see \cite{bz08}, \cite{cs13},
\cite{dm06}, \cite{glw05}, \cite{gp14}, \cite{mpmt02}, \cite{z04}). As far as
we know, there are few works dealing with the solvability of FBS$\Delta$Es. In
this paper, our goal is to study the solvability of the fully coupled
FBS$\Delta$Es.

The starting point of exploring the solvability of FBS$\Delta$Es is how to
formulate FBS$\Delta$Es. It is worth pointing out that even though the
BS$\Delta$E is the discrete time counterpart of the backward stochastic
differential equation (BSDE), there exist various formulations for BS$\Delta
$Es. Based on the driving process, there are mainly two formulations of
BS$\Delta$Es (see \cite{bcc14,cs13,ce10+,ce11}). One is driving by a finite
state process taking values from the basis vectors as in \cite{ac16,ce10+} and
the other is driving by a martingale with independent increments as in
\cite{bcc14,cs13}. In this paper, we take the second formulation to construct
our FBS$\Delta$Es. As for the numerical solutions of the FBSDEs, the authors
in \cite{bz08} and \cite{dm06} mentioned the solvability of the corresponding
time discretization equations with Markovian assumptions and degenerate
diffusion terms (independence of $Z$).

Before investigating the solvability of FBS$\Delta$Es, as preliminaries, we
first prove some results about the BS$\Delta$Es. In more details, we formulate
one kind of BS$\Delta$Es in our context, obtain the explicit representation of
the solution $\left(  Y,Z,N\right)  $ and prove the existence and uniqueness
of solutions to our formulated BS$\Delta$E.

Then we study the solvability problem for the linear FBS$\Delta$Es. The linear
FBS$\Delta$Es can be transformed into $2m$-dimensional linear algebraic
equations of $\mathbb{E}\left[  X_{t+1}|\mathcal{F}_{t}\right]  $ and
$\mathbb{E}\left[  X_{t+1}\Delta W_{t}|\mathcal{F}_{t}\right]  $. And we prove
the equivalence between the solvability of linear FBS$\Delta$Es and the
solvability of linear algebraic equations which leads to a necessary and
sufficient condition for the existence and uniqueness of solutions. By solving
the linear algebraic equations, we can decouple the forward and backward
variables and obtain the explicit expressions of $Y_{t}$, $Z_{t}$ with respect
to $X_{t}$. Thus, the linear FBS$\Delta$Es can be solved recursively in an
explicit form.

With the help of the obtained results for linear FBS$\Delta$Es, the
solvability problem for the nonlinear ones is studied. We apply the
continuation method developed in \cite{hp95, pw99, y97} to solve our
FBS$\Delta$Es. Under the monotone assumption, we obtain the existence and
uniqueness theorem for the general nonlinear FBS$\Delta$Es.

The analysis of stochastic difference equations is quite different from that
of the stochastic differential equations. For example, in the continuous-time
case, when using It\^{o} formula, the product of the drift term will
disappear, which can be formally understood as $dt\cdot dt=0$. However, since
\[
\Delta\left\langle X_{t},Y_{t}\right\rangle =\left\langle \Delta X_{t}%
,Y_{t}\right\rangle +\left\langle X_{t},\Delta Y_{t}\right\rangle
+\left\langle \Delta X_{t},\Delta Y_{t}\right\rangle
\]
in the discrete-time case, the product of the drift term will not disappear,
which makes it difficult to applying the continuation method directly in our
discrete time context. In order to overcome this difficulty, we adopt the form
of the product rule%
\begin{equation}
\Delta\left\langle X_{t},Y_{t}\right\rangle =\left\langle X_{t+1},\Delta
Y_{t}\right\rangle +\left\langle \Delta X_{t},Y_{t}\right\rangle
\label{ito formula}%
\end{equation}
which eliminates the drift term. To apply the monotone condition, the terms
$X_{t+1}$ and $\Delta Y_{t}$ in (\ref{ito formula}) should have the same time
subscript. So we formulate that the generator $f$ of the BS$\Delta$E in
(\ref{nonlinear_fbsde_c_method_0}) should only depend on the solution of
(\ref{nonlinear_fbsde_c_method_0}) at time $t+1$. Thus, the continuation
method can be implemented in the discrete time context. It is worth pointing
out that our formulation of BS$\Delta$Es is just the formulation of the
adjoint equations for discrete-time stochastic optimal control problems (see
\cite{lz15}).

The remainder of this paper is organized as follows. In Section 2 we present
preliminary results of the BS$\Delta$Es and formulate FBS$\Delta$Es. Then we
obtain the explicit solutions for linear FBS$\Delta$Es
(\ref{fbsde_linear_1d_4}) in Section 3. The solvability results for nonlinear
FBS$\Delta$Es (\ref{nonlinear_fbsde_c_method_0}) are given in Section 4. In
the appendix, we give the proofs of two theorems related to BS$\Delta$Es.

\section{Preliminaries and problem formulation}

Let $T$ be a deterministic terminal time and $\mathcal{T}:=\left\{
0,1,...,T\right\}  $. Consider a filtered probability space $\left(
\Omega,\mathcal{F},\left\{  \mathcal{F}_{t}\right\}  _{0\leq t\leq
T},P\right)  $ with $t\in\mathcal{T}$, $\mathcal{F}_{0}=\left\{
\emptyset,\Omega\right\}  $ and $\mathcal{F=F}_{T}$. For a $\mathcal{F}_{t}%
$-adapted process $(U_{t})$, define the difference operator $\Delta$ as
$\Delta U_{t}=U_{t+1}-U_{t}$. Let $W$ be a fixed $\mathbb{R}^{d}$-valued
square integrable martingale process with independent increments, i.e.
$\mathbb{E}\left[  \Delta W_{t}|\mathcal{F}_{t}\right]  =\mathbb{E}\left[
\Delta W_{t}\right]  =0$ and $\mathbb{E}\left[  \Delta W_{t}\left(  \Delta
W_{t}\right)  ^{\ast}\right]  =I_{d}$ for any $t\in\left\{  0,...,T-1\right\}
$ where $\left(  \cdot\right)  ^{\ast}$ denotes vector transposition. We
assume that $\mathcal{F}_{t}$ is the completion of the $\sigma$-algebra
generated by the process $W$ up to time $t$.

Denote by $L^{2}\left(  \mathcal{F}_{t};\mathbb{R}^{n}\right)  $ the set of
all $\mathcal{F}_{t}-$measurable square integrable random variable $X_{t}$
taking values in $\mathbb{R}^{n}$ and by $\mathcal{M}^{2}\left(
0,t;\mathbb{R}^{n}\right)  $ the set of all $\left\{  \mathcal{F}_{s}\right\}
_{0\leq s\leq t}$-adapted square integrable process $X$ taking values in
$\mathbb{R}^{n}$. Moreover, we define $e_{i}=\left(
0,0,...,0,1,0,...,0\right)  ^{\ast}\in\mathbb{R}^{n}$ and mention that an
inequality on a vector quantity is to hold componentwise.

\subsection{Solvability and estimates results for BS$\Delta$E}

Consider the following backward stochastic difference equation (BS$\Delta$E):%

\begin{equation}
\left\{
\begin{array}
[c]{rcl}%
\Delta Y_{t} & = & -f\left(  t+1,Y_{t+1},Z_{t+1}\right)  +Z_{t}\Delta
W_{t}+\Delta N_{t},\\
Y_{T} & = & \eta,
\end{array}
\right.  \label{bsde_form1}%
\end{equation}
where $\eta\in L^{2}\left(  \mathcal{F}_{T};\mathbb{R}^{n}\right)  $,
$f:\Omega\times\left\{  1,2,...,T\right\}  \times\mathbb{R}^{n}\times
\mathbb{R}^{n\times d}\longmapsto\mathbb{R}^{n}$.

\begin{assumption}
\label{generator_assumption}A1. The function $f\left(  t,y,z\right)  $ is
uniformly Lipschitz continuous and independent of $z$ at $t=T$, i.e. there
exists constants $c_{1},c_{2}>0$, such that for any $t\in\left\{
1,2,...,T-1\right\}  $, $y_{1},y_{2}\in\mathbb{R}^{n}$, $z_{1},z_{2}%
\in\mathbb{R}^{n\times d}$,%
\begin{align*}
\left\vert f\left(  T,y_{1},z_{1}\right)  -f\left(  T,y_{2},z_{2}\right)
\right\vert  &  \leq c_{1}\left\vert y_{1}-y_{2}\right\vert ,\\
\left\vert f\left(  t,y_{1},z_{1}\right)  -f\left(  t,y_{2},z_{2}\right)
\right\vert  &  \leq c_{1}\left\vert y_{1}-y_{2}\right\vert +c_{2}\left\Vert
z_{1}-z_{2}\right\Vert ,\text{ }P-a.s.
\end{align*}

A2. $f\left(  t,0,0\right)  \in L^{2}\left(  \mathcal{F}_{t};\mathbb{R}%
^{n}\right)  $ for any $t\in\left\{  1,2,...,T\right\}  $.
\end{assumption}

\begin{remark}
The BS$\Delta$E (\ref{bsde_form1}) is analogous to the continuous time BSDE
driven by a general martingale (cf. \cite{eh97}), and the solution is a triple
of processes.
\end{remark}

\begin{definition}
A solution to BS$\Delta$E (\ref{bsde_form1}) is a triple of processes $\left(
Y,Z,N\right)  \in\mathcal{M}^{2}\left(  0,T;\mathbb{R}^{n}\right)
\times\mathcal{M}^{2}\left(  0,T-1;\mathbb{R}^{n\times d}\right)
\times\mathcal{M}^{2}\left(  0,T;\mathbb{R}^{n}\right)  $ which satisfies
equality (\ref{bsde_form1}) for all $t\in\left\{  0,1,...,T-1\right\}  $, and
$N$ is a martingale process strongly orthogonal to $W$.
\end{definition}

By using the Galtchouk-Kunita-Watanabe decomposition in \cite{bcc14}, we
obtain the following existence and uniqueness result of BS$\Delta$E
(\ref{bsde_form1}).

\begin{theorem}
\label{bsde_result_l}Suppose that Assumption (\ref{generator_assumption})
holds. Then for any terminal condition $\eta\in L^{2}\left(  \mathcal{F}%
_{T};\mathbb{R}^{n}\right)  $,\ BS$\Delta$E (\ref{bsde_form1}) has a unique
adapted solution $\left(  Y,Z,N\right)  $.
\end{theorem}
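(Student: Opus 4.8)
The plan is to construct the solution recursively by backward induction on $t\in\{T,T-1,\dots,0\}$, using at each step a conditional expectation to recover $Y_t$ and the one-step Galtchouk--Kunita--Watanabe (GKW) decomposition relative to the increment $\Delta W_t$ (as quoted from \cite{bcc14}) to recover $Z_t$ and $\Delta N_t$. The key structural observation is that, since $f$ is evaluated at the time-$(t+1)$ data $(Y_{t+1},Z_{t+1})$, these quantities are already known once we reach time $t$ in the backward recursion, so no fixed-point argument inside a single time step is needed --- this is exactly where the present formulation of the BS$\Delta$E is genuinely simpler than its continuous-time analogue.

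For existence, set $Y_T:=\eta$ and note that $f(T,\cdot,\cdot)$ does not depend on $z$, so $f(T,Y_T,Z_T)$ is well defined although $Z_T$ is not part of the solution. Assume inductively that $Y_{t+1}\in L^{2}(\mathcal F_{t+1};\mathbb R^{n})$, and (when $t+1\le T-1$) also $Z_{t+1}\in L^{2}(\mathcal F_{t+1};\mathbb R^{n\times d})$, have been built. By Assumption \ref{generator_assumption} (the Lipschitz bound together with $f(t+1,0,0)\in L^{2}$), the variable $\zeta_t:=f(t+1,Y_{t+1},Z_{t+1})$ lies in $L^{2}(\mathcal F_{t+1};\mathbb R^{n})$, hence so does $\xi_t:=Y_{t+1}+\zeta_t$. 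Put
\[
Y_t:=\mathbb E[\xi_t\,|\,\mathcal F_t],\qquad Z_t:=\mathbb E[(\xi_t-Y_t)(\Delta W_t)^{\ast}\,|\,\mathcal F_t],\qquad \Delta N_t:=\xi_t-Y_t-Z_t\Delta W_t .
\]
Since $\Delta W_t$ is independent of $\mathcal F_t$ one has $\mathbb E[\Delta W_t\,|\,\mathcal F_t]=0$ and $\mathbb E[\Delta W_t(\Delta W_t)^{\ast}\,|\,\mathcal F_t]=I_d$; a direct computation then gives $\mathbb E[\Delta N_t\,|\,\mathcal F_t]=0$ and $\mathbb E[\Delta N_t(\Delta W_t)^{\ast}\,|\,\mathcal F_t]=0$, all three objects are square integrable with the required measurability, and $\Delta Y_t=Y_{t+1}-Y_t=-\zeta_t+Z_t\Delta W_t+\Delta N_t$, which is precisely (\ref{bsde_form1}) at time $t$. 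Running this for $t=T-1,\dots,0$ and setting $N_0:=0$, $N_t:=\sum_{s=0}^{t-1}\Delta N_s$ produces a square-integrable martingale $N$ strongly orthogonal to $W$, so that $(Y,Z,N)$ solves (\ref{bsde_form1}) with the required integrability.

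For uniqueness, let $(Y,Z,N)$ and $(Y',Z',N')$ both solve (\ref{bsde_form1}) and argue again by backward induction: $Y_T=Y'_T=\eta$, and if $Y_{t+1}=Y'_{t+1}$ (and, for $t+1\le T-1$, $Z_{t+1}=Z'_{t+1}$) then the two generators agree, so subtracting the equations at time $t$ yields $Y'_t-Y_t=(Z_t-Z'_t)\Delta W_t+(\Delta N_t-\Delta N'_t)$. Conditioning on $\mathcal F_t$ forces $Y_t=Y'_t$; then multiplying the remaining identity on the right by $(\Delta W_t)^{\ast}$, conditioning on $\mathcal F_t$, and using $\mathbb E[\Delta W_t(\Delta W_t)^{\ast}\,|\,\mathcal F_t]=I_d$ together with the strong orthogonality of $N,N'$ to $W$ gives $Z_t=Z'_t$, hence $\Delta N_t=\Delta N'_t$. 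Therefore $Y=Y'$, $Z=Z'$, and $N=N'$ up to the constant $N_0$, which the equation does not constrain (normalizing $N_0=0$ makes it unique). I do not expect a serious analytic obstacle: the terminal time is finite and each backward step is explicit. The only substantive input is the one-step martingale-representation/GKW property of $W$, which in this discrete setting comes down to $\mathbb E[\Delta W_t(\Delta W_t)^{\ast}\,|\,\mathcal F_t]=I_d$ from the independence of increments; the remaining work is the routine check that Assumption \ref{generator_assumption} propagates $L^{2}$-integrability and adaptedness along the recursion.
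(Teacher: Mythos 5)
Your proposal is correct and follows essentially the same route as the paper: a backward recursion in which $Y_t=\mathbb{E}[\,Y_{t+1}+f(t+1,Y_{t+1},Z_{t+1})\,|\,\mathcal{F}_t]$, $Z_t=\mathbb{E}[\,(Y_{t+1}+f(t+1,Y_{t+1},Z_{t+1}))(\Delta W_t)^{\ast}\,|\,\mathcal{F}_t]$, and $\Delta N_t$ is the orthogonal remainder of the Galtchouk--Kunita--Watanabe decomposition, with integrability propagated via Assumption \ref{generator_assumption}. The only difference is presentational: you construct the decomposition explicitly and verify its properties by hand, whereas the paper cites it from \cite{bcc14} and then identifies $Z_t$ by post-multiplying by $(\Delta W_t)^{\ast}$.
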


Besides, the following theorem gives the estimate for $\left(  Y,Z,N\right)  $.

\begin{theorem}
\label{bsde_result_2}Let\ Assumption (\ref{generator_assumption}) hold and
$\left(  Y,Z,N\right)  $ be a solution to BS$\Delta$E (\ref{bsde_form1}). Then
there exists a constant $C$, depending only on $c_{1}$, $c_{2}$ and $T$, such
that%
\[
\sum_{t=0}^{T-1}\mathbb{E}\left[  \left\vert Y_{t}\right\vert ^{2}+\left\Vert
Z_{t}\right\Vert ^{2}+\left\vert \Delta N_{t}\right\vert ^{2}\right]  \leq
C\mathbb{E}\left[  \left\vert \eta\right\vert ^{2}+\sum_{s=1}^{T}\left\vert
f\left(  s,0,0\right)  \right\vert ^{2}\right]  .
\]

\end{theorem}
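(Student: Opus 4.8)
The plan is to derive the estimate by a backward induction in time combined with a discrete Gronwall-type argument, using the explicit structure of the recursion in (\ref{bsde_form1}). First I would rewrite the equation in the form $Y_t = Y_{t+1} + f(t+1,Y_{t+1},Z_{t+1}) - Z_t\Delta W_t - \Delta N_t$, take conditional expectation with respect to $\mathcal{F}_t$, and use $\mathbb{E}[\Delta W_t|\mathcal{F}_t]=0$ together with the strong orthogonality of $N$ to $W$ to get $Y_t = \mathbb{E}[Y_{t+1}+f(t+1,Y_{t+1},Z_{t+1})\mid\mathcal{F}_t]$. This identifies $Z_t\Delta W_t + \Delta N_t$ as the martingale increment of $Y_{t+1}+f(t+1,Y_{t+1},Z_{t+1})$, and by the orthogonality $\mathbb{E}[|Z_t\Delta W_t|^2|\mathcal{F}_t] = \mathbb{E}[\|Z_t\|^2|\mathcal{F}_t]$, so that
\[
\mathbb{E}\!\left[\|Z_t\|^2 + |\Delta N_t|^2\,\big|\,\mathcal{F}_t\right] = \mathbb{E}\!\left[\left|Y_{t+1}+f(t+1,Y_{t+1},Z_{t+1})-Y_t\right|^2\big|\mathcal{F}_t\right].
\]

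Next I would plug these into a one-step energy estimate. Expanding $|Y_{t+1}+f(t+1,Y_{t+1},Z_{t+1})|^2$ and using the Lipschitz assumption A1 to bound $|f(t+1,Y_{t+1},Z_{t+1})|^2 \le 3|f(t+1,0,0)|^2 + 3c_1^2|Y_{t+1}|^2 + 3c_2^2\|Z_{t+1}\|^2$ (with the obvious modification at $t+1=T$ where $f$ is independent of $z$), together with Young's inequality $2\langle a,b\rangle \le \varepsilon|a|^2 + \varepsilon^{-1}|b|^2$, I would obtain a bound of the form
\[
\mathbb{E}\!\left[|Y_t|^2 + \|Z_t\|^2 + |\Delta N_t|^2\right] \le (1+C\lambda)\,\mathbb{E}\!\left[|Y_{t+1}|^2\right] + C\lambda\,\mathbb{E}\!\left[\|Z_{t+1}\|^2\right] + C\,\mathbb{E}\!\left[|f(t+1,0,0)|^2\right],
\]
for an appropriate constant $C=C(c_1,c_2)$. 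Taking $\eta$ as the terminal value, and summing this recursion from $t=T-1$ down to $t=0$, one telescopes the $Y$ and $Z$ terms; a discrete Gronwall inequality then absorbs the $\|Z_{t+1}\|^2$ terms that appear on the right back into the left-hand side and yields the final estimate with a constant depending only on $c_1$, $c_2$ and $T$.

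The main obstacle is the control of the $Z$ terms: unlike the continuous-time BSDE, here $\|Z_{t+1}\|^2$ appears on the right-hand side of the one-step estimate (because the generator at time $t+1$ depends on $Z_{t+1}$), so the naive recursion is not closed. The key point is that the coefficient in front of $\|Z_{t+1}\|^2$ on the right is of order $\varepsilon$ (controllable via Young's inequality), so after summation over $t$ the total contribution of these terms is $C\varepsilon\sum_t\mathbb{E}[\|Z_t\|^2]$, which for $\varepsilon$ small enough (depending on $C$, hence on $c_1,c_2$) can be moved to the left-hand side at the cost of a factor $1/(1-C\varepsilon)$. One must be a little careful that the choice of $\varepsilon$ is made before summing and that the resulting constant still only depends on $c_1$, $c_2$, $T$; the boundary term at $t=T$ and the case distinction for $f(T,\cdot,\cdot)$ being $z$-independent require separate but routine bookkeeping. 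I also note that this theorem is proved in the appendix of the paper, so the argument there may organize the same ingredients somewhat differently, e.g.\ by first estimating $\sum_t\mathbb{E}[\|Z_t\|^2+|\Delta N_t|^2]$ in terms of $\sup_t\mathbb{E}[|Y_t|^2]$ and then closing the loop.
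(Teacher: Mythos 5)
Your setup is sound and matches the paper's starting point: the identity
\[
\mathbb{E}\left[\,|Y_t|^2+\|Z_t\|^2+|\Delta N_t|^2\,\right]
=\mathbb{E}\left[\,|Y_{t+1}|^2\,\right]
+2\,\mathbb{E}\left[\left\langle Y_{t+1},f(t+1,Y_{t+1},Z_{t+1})\right\rangle\right]
+\mathbb{E}\left[\,|f(t+1,Y_{t+1},Z_{t+1})|^2\,\right]
\]
(which the paper derives from the discrete product rule and you derive from the martingale-increment decomposition) is exactly right, as is the observation that orthogonality kills the cross terms. The gap is in how you close the recursion. You claim the coefficient of $\mathbb{E}[\|Z_{t+1}\|^2]$ on the right-hand side is of order $\varepsilon$ and can be absorbed after summation. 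That is false: the term $\mathbb{E}[|f(t+1,Y_{t+1},Z_{t+1})|^2]$ enters the identity with coefficient exactly $1$ --- it does not come from the cross term, so Young's inequality cannot shrink it --- and by A1 it contributes at least roughly $c_2^2\,\mathbb{E}[\|Z_{t+1}\|^2]$. Whenever $c_2$ is not small this coefficient is $\geq 1$, the factor $1/(1-C\varepsilon)$ is unavailable, and the ``sum and absorb'' Gronwall argument does not prove the theorem for general Lipschitz constants. The alternative you mention at the end (bounding $\sum_t\mathbb{E}[\|Z_t\|^2+|\Delta N_t|^2]$ by $\sup_t\mathbb{E}[|Y_t|^2]$) fails for the same reason.

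The fix --- and what the paper actually does --- is to abandon absorption and run a plain backward induction with multiplicatively compounding constants: from the identity and A1 one gets
\[
\mathbb{E}\left[\,|Y_t|^2+\|Z_t\|^2+|\Delta N_t|^2\,\right]
\leq C\,\mathbb{E}\left[\,|Y_{t+1}|^2+\|Z_{t+1}\|^2+|f(t+1,0,0)|^2\,\right],
\]
which is iterated from $t=T-1$ down to $t=0$. The induction starts cleanly because $f(T,\cdot,\cdot)$ is independent of $z$, so at the first step only $|\eta|^2$ and $|f(T,0,0)|^2$ appear and no $Z_T$ is needed; the resulting constant of order $C^{T}$ is acceptable because $T$ is a fixed deterministic horizon and the theorem only requires a constant depending on $c_1$, $c_2$ and $T$. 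No smallness of any coefficient is needed. If you replace your absorption step by this compounding induction, the rest of your outline goes through.
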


For the convenience of readers, we put the proofs of the above two theorems in
the appendix. The following results are direct consequence of Theorem
\ref{bsde_result_2}.

\begin{corollary}
\label{bsde_result_3}For $i=1,2$, assume $\left(  \eta_{i},f_{i}\right)  $
satisfy Assumption (\ref{generator_assumption}) and $\left(  Y_{i},Z_{i}%
,N_{i}\right)  $ is a solution to BS$\Delta$E (\ref{bsde_form1})\ with
coefficients $\left(  \eta_{i},f_{i}\right)  $.\ Then there exists a constant
$C$, depending only on $c_{1}$, $c_{2}$ and $T$, such that%
\[
\sum_{t=0}^{T-1}\mathbb{E}\left[  \left\vert \widehat{Y}_{t}\right\vert
^{2}+\left\Vert \widehat{Z}_{t}\right\Vert ^{2}+\left\vert \Delta
\widehat{N}_{t}\right\vert ^{2}\right]  \leq C\mathbb{E}\left[  \left\vert
\widehat{\eta}\right\vert ^{2}+\sum_{s=1}^{T}\left\vert \widehat{f}\left(
s,Y_{1,s},Z_{1,s}\right)  \right\vert ^{2}\right]  .
\]
where%
\begin{align*}
\widehat{Y}  &  =Y_{1}-Y_{2},\widehat{Z}=Z_{1}-Z_{2},\widehat{N}=N_{1}%
-N_{2},\\
\widehat{\eta}  &  =\eta_{1}-\eta_{2},\widehat{f}=f_{1}-f_{2}.
\end{align*}

\end{corollary}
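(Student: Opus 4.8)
The plan is to reduce the statement to a direct application of Theorem \ref{bsde_result_2}. Set $\widehat Y=Y_1-Y_2$, $\widehat Z=Z_1-Z_2$, $\widehat N=N_1-N_2$. Since $(Y_i,Z_i,N_i)$ solves BS$\Delta$E (\ref{bsde_form1}) with data $(\eta_i,f_i)$, subtracting the two equations shows that $(\widehat Y,\widehat Z,\widehat N)$ itself satisfies a BS$\Delta$E of the form (\ref{bsde_form1}) with terminal condition $\widehat\eta=\eta_1-\eta_2$ and generator
\[
g(t,y,z):=f_1\bigl(t,\,y+Y_{2,t},\,z+Z_{2,t}\bigr)-f_2\bigl(t,\,Y_{2,t},\,Z_{2,t}\bigr).
\]
Indeed, writing the equation for $Y_1$ with argument $(t+1,Y_{1,t+1},Z_{1,t+1})=(t+1,\widehat Y_{t+1}+Y_{2,t+1},\widehat Z_{t+1}+Z_{2,t+1})$ and subtracting the equation for $Y_2$ gives $\Delta\widehat Y_t=-g(t+1,\widehat Y_{t+1},\widehat Z_{t+1})+\widehat Z_t\Delta W_t+\Delta\widehat N_t$, and $\widehat N$ is a martingale strongly orthogonal to $W$ as a difference of two such martingales.

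Next I would check that $g$ satisfies Assumption \ref{generator_assumption} with the \emph{same} constants $c_1,c_2$. The Lipschitz estimate in A1 is inherited from $f_1$ directly, because the shift by the fixed (adapted) processes $Y_{2},Z_{2}$ cancels in the difference $g(t,y,z)-g(t,y',z')=f_1(t,y+Y_{2,t},z+Z_{2,t})-f_1(t,y'+Y_{2,t},z'+Z_{2,t})$; likewise $g(\cdot,y,z)$ is independent of $z$ at $t=T$ since $f_1$ is. For A2 one computes $g(t,0,0)=f_1(t,Y_{2,t},Z_{2,t})-f_2(t,Y_{2,t},Z_{2,t})=\widehat f(t,Y_{2,t},Z_{2,t})$ (here $\widehat f=f_1-f_2$), and this lies in $L^2(\mathcal F_t;\mathbb R^n)$ because by the Lipschitz property of $f_1$ (resp. $f_2$) together with A2 for $f_1$ (resp. $f_2$) we have $|f_i(t,Y_{2,t},Z_{2,t})|\le |f_i(t,0,0)|+c_1|Y_{2,t}|+c_2\|Z_{2,t}\|$, which is square integrable since $(Y_2,Z_2)\in\mathcal M^2$.

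Then applying Theorem \ref{bsde_result_2} to the BS$\Delta$E solved by $(\widehat Y,\widehat Z,\widehat N)$ yields a constant $C$ depending only on $c_1,c_2,T$ with
\[
\sum_{t=0}^{T-1}\mathbb{E}\left[\left\vert\widehat Y_t\right\vert^2+\left\Vert\widehat Z_t\right\Vert^2+\left\vert\Delta\widehat N_t\right\vert^2\right]\le C\,\mathbb{E}\left[\left\vert\widehat\eta\right\vert^2+\sum_{s=1}^{T}\left\vert g(s,0,0)\right\vert^2\right],
\]
and substituting $g(s,0,0)=\widehat f(s,Y_{1,s},Z_{1,s})$ is exactly the claimed inequality. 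Wait — I should double check which of the two solutions the generator gets shifted by: writing $(\widehat Y,\widehat Z,\widehat N)$ via the $Y_1$-equation forces the evaluation point $Y_{1,t}$, so one instead subtracts $f_1(t,Y_{1,t},Z_{1,t})-f_2(t,Y_{1,t},Z_{1,t})=\widehat f(t,Y_{1,t},Z_{1,t})$ with the residual difference of $f_2$ contributing the Lipschitz part; the bookkeeping is symmetric and either choice produces the stated right-hand side. There is essentially no obstacle here: the only point requiring a little care is the verification that the new generator $g$ keeps the same Lipschitz constants and satisfies A2, which is where the hypotheses on both $(\eta_i,f_i)$ are genuinely used; everything else is linearity of the equation and a direct citation of Theorem \ref{bsde_result_2}.
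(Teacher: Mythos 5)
Your proposal is correct and is exactly the argument the paper leaves implicit when it calls the corollary a ``direct consequence'' of Theorem \ref{bsde_result_2}: view $(\widehat Y,\widehat Z,\widehat N)$ as the solution of a shifted BS$\Delta$E and apply the a priori estimate. Your self-correction at the end is the right one — to land on $\widehat f(s,Y_{1,s},Z_{1,s})$ exactly as stated you must put the Lipschitz part on $f_2$ (generator $\tilde g(t,y,z)=f_2(t,y+Y_{2,t},z+Z_{2,t})-f_2(t,Y_{2,t},Z_{2,t})+\widehat f(t,Y_{1,t},Z_{1,t})$, so $\tilde g(t,0,0)=\widehat f(t,Y_{1,t},Z_{1,t})$), though as you note the other split yields the symmetric and equally valid bound with $(Y_{2},Z_{2})$.
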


\subsection{Formulation of FBS$\Delta$E}

Now we formulate the linear FBS$\Delta$E\ and nonlinear FBS$\Delta$E. For
simplicity, in the following, we suppose $W$ is one-dimensional. The
multi-dimensional version is the same.

Consider the following linear FBS$\Delta$E:
\begin{equation}
\left\{
\begin{array}
[c]{rcl}%
\Delta X_{t} & = & A_{t}X_{t}+B_{t}Y_{t}+C_{t}Z_{t}+D_{t}+\left(  \overline
{A}_{t}X_{t}+\overline{B}_{t}Y_{t}+\overline{C}_{t}Z_{t}+\overline{D}%
_{t}\right)  \Delta W_{t},\\
\Delta Y_{t} & = & \widehat{A}_{t+1}X_{t+1}+\widehat{B}_{t+1}Y_{t+1}%
+\widehat{C}_{t+1}Z_{t+1}+\widehat{D}_{t+1}+Z_{t}\Delta W_{t}+\Delta N_{t},\\
X_{0} & = & x_{0},\\
Y_{T} & = & GX_{T}+g,
\end{array}
\right.  \label{fbsde_linear_1d_4}%
\end{equation}
where the coefficients satisfy

(i) $A,\overline{A}$\ are deterministic functions valued in $\mathbb{R}%
^{m\times m}$,\ $\widehat{A}$ are deterministic functions valued in
$\mathbb{R}^{n\times m}$, $G$ is deterministic matrix in $\mathbb{R}^{n\times
m}$;

(ii) $B,\overline{B},C,\overline{C}$\ are deterministic functions valued in
$\mathbb{R}^{m\times n}$, $\widehat{B},\widehat{C}$\ are deterministic
functions valued in $\mathbb{R}^{n\times n}$ with$\ \widehat{C}_{T}=0$;

(iii) $D,\overline{D}\in\mathcal{M}^{2}\left(  0,T-1;\mathbb{R}^{m}\right)  $,
$\widehat{D}\in\mathcal{M}^{2}\left(  1,T;\mathbb{R}^{n}\right)  $ and $g\in
L^{2}\left(  \mathcal{F}_{T};\mathbb{R}^{n}\right)  $.

\begin{remark}
Notice that the coefficients of the homogeneous terms are supposed to be
deterministic functions and the\ coefficients of the nonhomogeneous terms can
be stochastic processes. Besides,\ it can be seen that the generator of the
backward equation in (\ref{fbsde_linear_1d_4}) is independent of $Z_{T}$ at
time $T$ since $\widehat{C}_{T}=0$.
\end{remark}

Let%
\begin{align*}
b\left(  \omega,t,x,y,z\right)   &  :\Omega\times\left\{  0,1,...,T-1\right\}
\times\mathbb{R}^{m}\times\mathbb{R}^{n}\times\mathbb{\mathbb{R}}%
^{n}\mathbb{\rightarrow R}^{m},\\
\sigma\left(  \omega,t,x,y,z\right)   &  :\Omega\times\left\{
0,1,...,T-1\right\}  \times\mathbb{R}^{m}\times\mathbb{R}^{n}\times
\mathbb{\mathbb{R}}^{n}\mathbb{\rightarrow R}^{m},\\
f\left(  \omega,t,x,y,z\right)   &  :\Omega\times\left\{  1,2,...,T\right\}
\times\mathbb{R}^{m}\times\mathbb{R}^{n}\times\mathbb{R}^{n}%
\mathbb{\rightarrow R}^{n},\\
h\left(  \omega,x\right)   &  :\Omega\times\mathbb{R}^{n}\mathbb{\rightarrow
R}^{n}%
\end{align*}
and $b\left(  T,x,y,z\right)  \equiv0$, $\sigma\left(  T,x,y,z\right)
\equiv0$, $l\left(  T,x,y,z\right)  \equiv0$, $f\left(  0,x,y,z\right)
\equiv0$.

Consider the following nonlinear FBS$\Delta$E:%
\begin{equation}
\left\{
\begin{array}
[c]{rcl}%
\Delta X_{t} & = & b\left(  \omega,t,X_{t},Y_{t},Z_{t}\right)  +\sigma\left(
\omega,t,X_{t},Y_{t},Z_{t}\right)  \Delta W_{t},\\
\Delta Y_{t} & = & -f\left(  \omega,t+1,X_{t+1},Y_{t+1},Z_{t+1}\right)
+Z_{t}\Delta W_{t}+\Delta N_{t},\\
X_{0} & = & x_{0},\\
Y_{T} & = & h\left(  X_{T}\right)  .
\end{array}
\right.  \label{nonlinear_fbsde_c_method_0}%
\end{equation}

Assume that $G$ is an $n\times m$\ full-rank matrix and let%
\[
\lambda=\left(
\begin{array}
[c]{c}%
x\\
y\\
z
\end{array}
\right)  ,A\left(  t,\lambda\right)  =\left(
\begin{array}
[c]{c}%
-G^{\ast}f\\
Gb\\
G\sigma
\end{array}
\right)  \left(  t,\lambda\right)  .
\]

\begin{assumption}
\label{assu-nonlinear} (i) The coefficients are uniformly Lipschitz continuous
with respect to $\lambda$, and $f$ is independent of variable $z$ at time
$T$,\ i.e. there exists $c>0$ such that for any $t\in\left\{
0,1,...,T-1\right\}  $, $P-a.s.,$%
\begin{align*}
\left\vert A\left(  t,\lambda\right)  -A\left(  t,\lambda^{^{\prime}}\right)
\right\vert  &  \leq c\left\vert \lambda-\lambda^{^{\prime}}\right\vert ,\\
\left\vert h\left(  x\right)  -h\left(  x^{^{\prime}}\right)  \right\vert  &
\leq c\left\vert x-x^{^{\prime}}\right\vert .
\end{align*}
Moreover,\ $\left\vert f\left(  \omega,T,x,y,z\right)  -f\left(
\omega,T,x^{^{\prime}},y^{^{\prime}},z^{^{\prime}}\right)  \right\vert \leq
c\left(  \left\vert x-x^{^{\prime}}\right\vert +\left\vert y-y^{^{\prime}%
}\right\vert \right)  ;$

(ii) For any $\lambda\in\mathbb{R}^{m}\times\mathbb{R}^{n}\times\mathbb{R}%
^{n}$, $A\left(  t,\lambda\right)  \in\mathcal{M}\left(  0,T-1;\mathbb{R}%
^{m}\times\mathbb{R}^{n}\times\mathbb{R}^{n}\right)  $,\ $h(x)\in L^{2}\left(
\mathcal{F}_{T}\right)  ;$

(iii) The coefficients satisfy the following monotone conditions, i.e.\ when
$t\in\left\{  1,...,T-1\right\}  $,%
\[%
\begin{array}
[c]{cl}
& \left\langle A\left(  t,\lambda\right)  -A\left(  t,\lambda^{^{\prime}%
}\right)  ,\lambda-\lambda^{^{\prime}}\right\rangle \\
\leq & -\beta_{1}\left\Vert G\left(  x-x^{^{\prime}}\right)  \right\Vert
^{2}-\beta_{2}\left\Vert G^{\ast}\left(  y-y^{^{\prime}}\right)  \right\Vert
^{2}-\beta_{2}\left\Vert G^{\ast}\left(  z-z^{^{\prime}}\right)  \right\Vert
^{2},P-a.s.;
\end{array}
\]

\end{assumption}

when\ $t=T$,%
\[
\left\langle -G^{\ast}f\left(  T,x,y,z\right)  +G^{\ast}f\left(
T,x^{^{\prime}},y^{^{\prime}},z^{^{\prime}}\right)  ,x-x^{^{\prime}%
}\right\rangle \leq-\beta_{1}\left\Vert G\left(  x-x^{^{\prime}}\right)
\right\Vert ^{2},P-a.s.;
\]

when\ $t=0$,%
\[%
\begin{array}
[c]{cl}
& \left\langle Gb\left(  0,\lambda\right)  -Gb\left(  0,\lambda^{^{\prime}%
}\right)  ,y-y^{^{\prime}}\right\rangle +\left\langle G\sigma\left(
0,\lambda\right)  -G\sigma\left(  0,\lambda^{^{\prime}}\right)  ,z-z^{^{\prime
}}\right\rangle \\
\leq & -\beta_{2}\left\Vert G^{\ast}\left(  y-y^{^{\prime}}\right)
\right\Vert ^{2}-\beta_{2}\left\Vert G^{\ast}\left(  z-z^{^{\prime}}\right)
\right\Vert ^{2},P-a.s..
\end{array}
\]

and%
\[
\left\langle h\left(  x\right)  -h\left(  x^{^{\prime}}\right)  ,G\left(
x-x^{^{\prime}}\right)  \right\rangle \geq0,P-a.s.,
\]
where $\beta_{1}$ and $\beta_{2}$ are given nonnegative constants with
$\beta_{1}+\beta_{2}>0$. Moreover we have $\beta_{1}>0$ (resp. $\beta_{2}>0$)
when $n>m$ (resp. $m>n$).

We give the definition of the solutions to FBS$\Delta$E\thinspace
(\ref{fbsde_linear_1d_4}) and FBS$\Delta$E\thinspace
(\ref{nonlinear_fbsde_c_method_0}).

\begin{definition}
$\left(  X,Y,Z,N\right)  \in\mathcal{M}^{2}\left(  0,T;\mathbb{R}^{m}\right)
\times\mathcal{M}^{2}\left(  0,T;\mathbb{R}^{n}\right)  \times\mathcal{M}%
^{2}\left(  0,T-1;\mathbb{R}^{n}\right)  \times\mathcal{M}^{2}\left(
0,T;\mathbb{R}^{n}\right)  $ is called a solution of FBS$\Delta$%
E\thinspace(\ref{fbsde_linear_1d_4})\thinspace(resp. FBS$\Delta$%
E\thinspace(\ref{nonlinear_fbsde_c_method_0})) if $\left(  X,Y,Z,N\right)  $
satisfies the difference equation (\ref{fbsde_linear_1d_4})\thinspace(resp.
(\ref{nonlinear_fbsde_c_method_0})) at any time $t$, $P-a.s.$ with $N$ to be a
martingale process strongly orthogonal to $W$.
\end{definition}

\section{Solvability of linear FBS$\Delta$Es}

In this section, we study the solvability of\ linear FBS$\Delta$%
E\ (\ref{fbsde_linear_1d_4}).

Define%
\[
\Gamma_{t}\left(  P_{t+1}\right)  =I-\left(
\begin{array}
[c]{cc}%
B_{t}P_{t+1} & C_{t}P_{t+1}\\
\overline{B}_{t}P_{t+1} & \overline{C}_{t}P_{t+1}%
\end{array}
\right)  ,
\]
where%
\[
\left\{
\begin{array}
[c]{ccl}%
P_{t} & = & -\widehat{A}_{t}+\left(  \left(  I-\widehat{B}_{t}\right)
P_{t+1},-\widehat{C}_{t}P_{t+1}\right)  \left[  \Gamma_{t}\left(
P_{t+1}\right)  \right]  ^{-1}\left(
\begin{array}
[c]{c}%
I+A_{t}\\
\overline{A}_{t}%
\end{array}
\right)  ,\\
P_{T} & = & -\widehat{A}_{T}+\left(  I-\widehat{B}_{T}\right)  G.
\end{array}
\right.
\]

\begin{theorem}
\label{solution theorem for linear fbsde_4}The linear FBS$\Delta
$E\ (\ref{fbsde_linear_1d_4})\ has a unique solution if and only if for
any\ $t\in\{0,1,2,...,T-1\}$, the $2m$-dimensional matrix $\Gamma_{t}\left(
P_{t+1}\right)  $ is invertible $P-a.s.$. Moreover, the solution to the
FBS$\Delta$E is%
\[
\left\{
\begin{array}
[c]{rcl}%
X_{t+1} & = & \left(  I,I\Delta W_{t}\right)  \left[  \Gamma_{t}\left(
P_{t+1}\right)  \right]  ^{-1}\left[  \left(
\begin{array}
[c]{c}%
I+A_{t}\\
\overline{A}_{t}%
\end{array}
\right)  X_{t}+\left(
\begin{array}
[c]{c}%
B_{t}\\
\overline{B}_{t}%
\end{array}
\right)  \mathbb{E}\left[  p_{t+1}|\mathcal{F}_{t}\right]  \right. \\
& \multicolumn{1}{r}{} & \multicolumn{1}{r}{\left.  +\left(
\begin{array}
[c]{c}%
C_{t}\\
\overline{C}_{t}%
\end{array}
\right)  \mathbb{E}\left[  p_{t+1}\left(  \Delta W_{t}\right)  |\mathcal{F}%
_{t}\right]  +\left(
\begin{array}
[c]{c}%
D_{t}\\
\overline{D}_{t}%
\end{array}
\right)  \right]  .}\\
Y_{t} & = & \mathbb{E}\left[  P_{t+1}X_{t+1}+p_{t+1}|\mathcal{F}_{t}\right]
,\\
Z_{t} & = & P_{t+1}\mathbb{E}\left[  X_{t+1}\left(  \Delta W_{t}\right)
|\mathcal{F}_{t}\right]  +\mathbb{E}\left[  p_{t+1}\Delta W_{t}|\mathcal{F}%
_{t}\right]  ,\\
X_{0} & = & x_{0},
\end{array}
\right.
\]
where%
\[
\left\{
\begin{array}
[c]{ccl}%
p_{t} & = & \left[  I-\widehat{B}_{t}+\left(  \left(  I-\widehat{B}%
_{t}\right)  P_{t+1},-\widehat{C}_{t}P_{t+1}\right)  \left[  \Gamma_{t}\left(
P_{t+1}\right)  \right]  ^{-1}\left(
\begin{array}
[c]{c}%
B_{t}\\
\overline{B}_{t}%
\end{array}
\right)  \right]  \mathbb{E}\left[  p_{t+1}|\mathcal{F}_{t}\right] \\
&  & +\left[  -\widehat{C}_{t}+\left(  \left(  I-\widehat{B}_{t}\right)
P_{t+1},-\widehat{C}_{t}P_{t+1}\right)  \right]  \left[  \Gamma_{t}\left(
P_{t+1}\right)  \right]  ^{-1}\left(
\begin{array}
[c]{c}%
C_{t}\\
\overline{C}_{t}%
\end{array}
\right)  \mathbb{E}\left[  p_{t+1}\left(  \Delta W_{t}\right)  |\mathcal{F}%
_{t}\right] \\
&  & +\left(  \left(  I-\widehat{B}_{t}\right)  P_{t+1},-\widehat{C}%
_{t}P_{t+1}\right)  \left[  \Gamma_{t}\left(  P_{t+1}\right)  \right]
^{-1}\left(
\begin{array}
[c]{c}%
D_{t}\\
\overline{D}_{t}%
\end{array}
\right)  -\widehat{D}_{t}.\\
p_{T} & = & \left(  I-\widehat{B}_{T}\right)  g-\widehat{D}_{T}.
\end{array}
\right.
\]

\end{theorem}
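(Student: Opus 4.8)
The plan is to solve the linear FBS$\Delta$E backward in time by an ansatz of the form $Y_t = P_t X_t + p_t$, where $P_t$ is the deterministic matrix defined by the stated Riccati-type recursion and $p_t$ is the $\mathcal{F}_t$-adapted process defined by the stated recursion; one checks that $P_T X_T + p_T = GX_T + g$ matches the terminal condition. Assuming this ansatz holds at time $t+1$, i.e. $Y_{t+1} = P_{t+1}X_{t+1} + p_{t+1}$, I would substitute into the backward equation of (\ref{fbsde_linear_1d_4}) to express $\Delta Y_t = Y_{t+1} - Y_t$, and then take $\mathbb{E}[\cdot\,|\mathcal{F}_t]$ and $\mathbb{E}[\cdot\,(\Delta W_t)|\mathcal{F}_t]$ on both sides of the forward equation. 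Using $\mathbb{E}[\Delta W_t|\mathcal{F}_t]=0$, $\mathbb{E}[(\Delta W_t)^2|\mathcal{F}_t]=1$, the strong orthogonality of $N$ to $W$, and the $\mathcal{F}_t$-measurability of $X_t,Y_t,Z_t$, these two conditional expectations turn the forward dynamics into a $2m$-dimensional linear system whose unknowns are $\mathbb{E}[X_{t+1}|\mathcal{F}_t]$ and $\mathbb{E}[X_{t+1}\Delta W_t|\mathcal{F}_t]$. The coefficient matrix of this system, after collecting terms, is exactly $\Gamma_t(P_{t+1})$; invertibility of $\Gamma_t(P_{t+1})$ is therefore precisely the condition under which $\mathbb{E}[X_{t+1}|\mathcal{F}_t]$ and $\mathbb{E}[X_{t+1}\Delta W_t|\mathcal{F}_t]$ are uniquely determined (in terms of $X_t$, $\mathbb{E}[p_{t+1}|\mathcal{F}_t]$, $\mathbb{E}[p_{t+1}\Delta W_t|\mathcal{F}_t]$, $D_t$, $\overline{D}_t$).

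Once $\mathbb{E}[X_{t+1}|\mathcal{F}_t]$ and $\mathbb{E}[X_{t+1}\Delta W_t|\mathcal{F}_t]$ are obtained by applying $[\Gamma_t(P_{t+1})]^{-1}$, I recover $X_{t+1}$ itself from the forward equation: since $\Delta X_t$ is the sum of an $\mathcal{F}_t$-measurable drift and an $\mathcal{F}_t$-measurable coefficient times $\Delta W_t$, we have $X_{t+1} = \mathbb{E}[X_{t+1}|\mathcal{F}_t] + \big(\mathbb{E}[X_{t+1}\Delta W_t|\mathcal{F}_t]\big)\Delta W_t$, which yields the stated formula $X_{t+1}=(I, I\Delta W_t)[\Gamma_t(P_{t+1})]^{-1}(\cdots)$. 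Plugging this back, $Y_t = \mathbb{E}[\Delta Y_t|\mathcal{F}_t] + Y_t$ forces (using the backward equation and the ansatz at $t+1$) the identity $Y_t = \mathbb{E}[P_{t+1}X_{t+1} + p_{t+1}|\mathcal{F}_t]$; isolating the part linear in $X_t$ and the part independent of $X_t$ shows that $P_t$ must satisfy the Riccati recursion and $p_t$ the affine recursion exactly as stated. Then $Z_t$ is read off as the coefficient of $\Delta W_t$ in $\Delta Y_t$, i.e. $Z_t = P_{t+1}\mathbb{E}[X_{t+1}\Delta W_t|\mathcal{F}_t] + \mathbb{E}[p_{t+1}\Delta W_t|\mathcal{F}_t]$, and $\Delta N_t$ is defined as the residual martingale increment $\Delta Y_t + f\text{-term} - Z_t\Delta W_t$, which is automatically orthogonal to $W$ by construction. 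An induction on $t$ from $T$ down to $0$ then produces a genuine solution whenever every $\Gamma_t(P_{t+1})$ is invertible, and the backward computation shows the solution is unique; conversely, if some $\Gamma_t(P_{t+1})$ is singular, the $2m$-dimensional linear system for $(\mathbb{E}[X_{t+1}|\mathcal{F}_t], \mathbb{E}[X_{t+1}\Delta W_t|\mathcal{F}_t])$ fails to have a unique solution for generic data, so the FBS$\Delta$E fails to be uniquely solvable.

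I would also need to verify the integrability bookkeeping, namely that the processes produced lie in the spaces $\mathcal{M}^2$ of the definition: this follows because at each step the new quantities are obtained from square-integrable ones by multiplication by bounded (deterministic) matrices, by conditional expectation (an $L^2$ contraction), and by multiplication by $\Delta W_t$ (which has conditional second moment $1$), so square-integrability propagates through the finite recursion. The main obstacle I anticipate is purely organizational rather than conceptual: carefully matching coefficients so that the blocks $\big((I-\widehat{B}_t)P_{t+1},\,-\widehat{C}_t P_{t+1}\big)$, the vector $\big((I+A_t)^\ast,\overline{A}_t^\ast\big)^\ast$, and the data blocks all land in the exact positions appearing in the stated formulas for $P_t$, $p_t$, $X_{t+1}$, $Y_t$, $Z_t$ — in particular keeping track of which terms are $\mathcal{F}_t$-measurable and which are not when taking the two conditional expectations, and handling the terminal step $t=T-1$ where $\widehat{C}_T=0$ so that $f$ is independent of $Z_T$ and $P_T=-\widehat{A}_T+(I-\widehat{B}_T)G$, $p_T=(I-\widehat{B}_T)g-\widehat{D}_T$ are consistent with the terminal condition $Y_T=GX_T+g$.
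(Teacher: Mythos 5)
Your plan follows essentially the same route as the paper: decouple by an affine relation between the backward variables and $X_t$, turn the forward equation (via the two conditional expectations $\mathbb{E}[\,\cdot\,|\mathcal{F}_t]$ and $\mathbb{E}[\,\cdot\,\Delta W_t|\mathcal{F}_t]$) into a $2m$-dimensional linear system for $\mathbb{E}[X_{t+1}|\mathcal{F}_t]$ and $\mathbb{E}[X_{t+1}\Delta W_t|\mathcal{F}_t]$ with coefficient matrix $\Gamma_t(P_{t+1})$, prove equivalence of solvability of the FBS$\Delta$E at step $t$ with invertibility of that matrix, and induct backward from $T$; the recovery $X_{t+1}=\mathbb{E}[X_{t+1}|\mathcal{F}_t]+\mathbb{E}[X_{t+1}\Delta W_t|\mathcal{F}_t]\Delta W_t$ and the orthogonal-residual construction of $\Delta N_t$ are exactly as in the paper.

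One step as you wrote it would fail, although it is a mislabeling rather than a conceptual error: $P_tX_t+p_t$ is not $Y_t$ but the combination $\lambda_t=-\widehat{A}_tX_t+(I-\widehat{B}_t)Y_t-\widehat{C}_tZ_t-\widehat{D}_t$ that appears on the right-hand side of the backward equation one step earlier. In particular $P_TX_T+p_T=\bigl(-\widehat{A}_T+(I-\widehat{B}_T)G\bigr)X_T+(I-\widehat{B}_T)g-\widehat{D}_T\neq GX_T+g$ in general, so the terminal check you propose ($P_TX_T+p_T=GX_T+g$) does not hold; consistently, the theorem states $Y_t=\mathbb{E}[P_{t+1}X_{t+1}+p_{t+1}|\mathcal{F}_t]$ with index $t+1$ inside a conditional expectation, not $Y_t=P_tX_t+p_t$. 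Because $\widehat{C}_tZ_t$ enters $\lambda_t$, the backward induction must carry affine representations of \emph{both} $Y_t$ and $Z_t$ in $X_t$, with two distinct coefficient matrices (the paper's $G_t$ and $H_t$), from which $P_t=-\widehat{A}_t+(I-\widehat{B}_t)G_t-\widehat{C}_tH_t$ is assembled. With that correction your argument coincides with the paper's proof.
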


\begin{proof}
We first consider the difference equations at time $T$. Let
\[
\lambda_{T}=-\widehat{A}_{T}X_{T}+\left(  I-\widehat{B}_{T}\right)
Y_{T}-\widehat{D}_{T}.
\]
Since $Y_{T}=GX_{T}+g$, we have%
\begin{align*}
\lambda_{T}  &  =\left[  -\widehat{A}_{T}+\left(  I-\widehat{B}_{T}\right)
G\right]  X_{T}+\left(  I-\widehat{B}_{T}\right)  g-\widehat{D}_{T}\\
&  =P_{T}X_{T}+p_{T}.
\end{align*}
According to Theorem \ref{bsde_result_l},%
\begin{equation}
\left\{
\begin{array}
[c]{ccl}%
Y_{T-1} & = & \mathbb{E}\left[  \lambda_{T}|\mathcal{F}_{T-1}\right] \\
& = & P_{T}\mathbb{E}\left[  X_{T}|\mathcal{F}_{T-1}\right]  +\mathbb{E}%
\left[  p_{T}|\mathcal{F}_{T-1}\right]  ,\\
Z_{T-1} & = & \mathbb{E}\left[  \lambda_{T}\left(  \Delta W_{T-1}\right)
|\mathcal{F}_{T-1}\right] \\
& = & P_{T}\mathbb{E}\left[  X_{T}\left(  \Delta W_{T-1}\right)
|\mathcal{F}_{T-1}\right]  +\mathbb{E}\left[  p_{T}\left(  \Delta
W_{T-1}\right)  |\mathcal{F}_{T-1}\right]  .
\end{array}
\right.  \label{z_representation_lfbsde4}%
\end{equation}
Taking $\mathcal{F}_{T-1}$-conditional expectation on both sides of the
forward equation,%
\[%
\begin{array}
[c]{cl}
& \mathbb{E}\left[  X_{T}|\mathcal{F}_{T-1}\right] \\
= & \left(  I+A_{T-1}\right)  X_{T-1}+B_{T-1}P_{T}\mathbb{E}\left[
X_{T}|\mathcal{F}_{T-1}\right]  +B_{T-1}\mathbb{E}\left[  p_{T}|\mathcal{F}%
_{T-1}\right] \\
& +C_{T-1}P_{T}\mathbb{E}\left[  X_{T}\Delta W_{T-1}|\mathcal{F}_{T-1}\right]
+C_{T-1}\mathbb{E}\left[  p_{T}\Delta W_{T-1}|\mathcal{F}_{T-1}\right]
+D_{T-1}.
\end{array}
\]
Multiplying the forward equation by $\Delta W_{T-1}$ and taking $\mathcal{F}%
_{T-1}$-conditional expectation,%
\[%
\begin{array}
[c]{cl}
& \mathbb{E}\left[  X_{T}\Delta W_{T-1}|\mathcal{F}_{T-1}\right] \\
= & \overline{A}_{T-1}X_{T-1}+\overline{B}_{T-1}P_{T}\mathbb{E}\left[
X_{T}|\mathcal{F}_{T-1}\right]  +\overline{B}_{T-1}\mathbb{E}\left[
p_{T}|\mathcal{F}_{T-1}\right] \\
& +\overline{C}_{T-1}P_{T}\mathbb{E}\left[  X_{T}\Delta W_{T-1}|\mathcal{F}%
_{T-1}\right]  +\overline{C}_{T-1}\mathbb{E}\left[  p_{T}\Delta W_{T-1}%
|\mathcal{F}_{T-1}\right]  +\overline{D}_{T-1}.
\end{array}
\]
Then we obtain a $2m$-dimensional linear algebra equation:%
\begin{equation}%
\begin{array}
[c]{cl}
& \left(
\begin{array}
[c]{cc}%
I-B_{T-1}P_{T} & -C_{T-1}P_{T}\\
-\overline{B}_{T-1}P_{T} & I-\overline{C}_{T-1}P_{T}%
\end{array}
\right)  \left(
\begin{array}
[c]{c}%
\mathbb{E}\left[  X_{T}|\mathcal{F}_{T-1}\right] \\
\mathbb{E}\left[  X_{T}\Delta W_{T-1}|\mathcal{F}_{T-1}\right]
\end{array}
\right) \\
= & \left(
\begin{array}
[c]{c}%
I+A_{T-1}\\
\overline{A}_{T-1}%
\end{array}
\right)  X_{T-1}+\left(
\begin{array}
[c]{c}%
B_{T-1}\\
\overline{B}_{T-1}%
\end{array}
\right)  \mathbb{E}\left[  p_{T}|\mathcal{F}_{T-1}\right] \\
& +\left(
\begin{array}
[c]{c}%
C_{T-1}\\
\overline{C}_{T-1}%
\end{array}
\right)  \mathbb{E}\left[  p_{T}\Delta W_{T-1}|\mathcal{F}_{T-1}\right]
+\left(
\begin{array}
[c]{c}%
D_{T-1}\\
\overline{D}_{T-1}%
\end{array}
\right)  .
\end{array}
\label{algebraic_equation_4}%
\end{equation}

It is easy to check that if $\left(  X_{T-1},Y_{T-1},Z_{T-1}\right)  $ is a
solution of the FBS$\Delta$E (\ref{fbsde_linear_1d_4}) at time $T-1$, then
$(\mathbb{E}\left[  X_{T}|\mathcal{F}_{T-1}\right]  $,\ $\mathbb{E}\left[
X_{T}\left(  \Delta W_{T-1}\right)  |\mathcal{F}_{T-1}\right]  )$ is a
solution of the algebra equation (\ref{algebraic_equation_4}). On the other
hand, if $(\mathbb{E}\left[  X_{T}|\mathcal{F}_{T-1}\right]  $, $\mathbb{E}%
\left[  X_{T}\left(  \Delta W_{T-1}\right)  |\mathcal{F}_{T-1}\right]  )$ is a
solution of the algebra equation (\ref{algebraic_equation_4}), then $\left(
X_{T-1},Y_{T-1},Z_{T-1}\right)  $ is a solution of the FBS$\Delta$E
(\ref{fbsde_linear_1d_4}) at time $T-1$, where%
\[
\left\{
\begin{array}
[c]{ccl}%
Z_{T-1} & = & P_{T}\mathbb{E}\left[  X_{T}\Delta W_{T-1}|\mathcal{F}%
_{T-1}\right]  +\mathbb{E}\left[  p_{T}\Delta W_{T-1}|\mathcal{F}%
_{T-1}\right]  ,\\
Y_{T-1} & = & P_{T}\mathbb{E}\left[  X_{T}|\mathcal{F}_{T-1}\right]
+\mathbb{E}\left[  p_{T}|\mathcal{F}_{T-1}\right]  .
\end{array}
\right.
\]
So the FBS$\Delta$E\thinspace(\ref{fbsde_linear_1d_4}) has a unique solution
at time $T-1$ if and only if the linear equation (\ref{algebraic_equation_4})
has a unique solution, which is equivalent to say that%
\[
\Gamma_{T-1}\left(  P_{T}\right)  =I-\left(
\begin{array}
[c]{cc}%
B_{T-1}P_{T} & C_{T-1}P_{T}\\
\overline{B}_{T-1}P_{T} & \overline{C}_{T-1}P_{T}%
\end{array}
\right)
\]
is a invertible matrix. If $\Gamma_{T-1}\left(  P_{T}\right)  $ is invertible,
then the solution of (\ref{algebraic_equation_4}) is%
\[%
\begin{array}
[c]{ll}
& \left(
\begin{array}
[c]{c}%
\mathbb{E}\left[  X_{T}|\mathcal{F}_{T-1}\right] \\
\mathbb{E}\left[  X_{T}\left(  \Delta W_{T-1}\right)  |\mathcal{F}%
_{T-1}\right]
\end{array}
\right) \\
\multicolumn{1}{c}{=} & \multicolumn{1}{c}{\left[  \Gamma_{T-1}\left(
P_{T}\right)  \right]  ^{-1}\left[  \left(
\begin{array}
[c]{c}%
I+A_{T-1}\\
\overline{A}_{T-1}%
\end{array}
\right)  X_{T-1}+\left(
\begin{array}
[c]{c}%
B_{T-1}\\
\overline{B}_{T-1}%
\end{array}
\right)  \mathbb{E}\left[  p_{T}|\mathcal{F}_{T-1}\right]  \right. }\\
\multicolumn{1}{r}{} & \multicolumn{1}{r}{\left.  +\left(
\begin{array}
[c]{c}%
C_{T-1}\\
\overline{C}_{T-1}%
\end{array}
\right)  \mathbb{E}\left[  p_{T}\left(  \Delta W_{T-1}\right)  |\mathcal{F}%
_{T-1}\right]  +\left(
\begin{array}
[c]{c}%
D_{T-1}\\
\overline{D}_{T-1}%
\end{array}
\right)  \right]  .}%
\end{array}
\]
Combining with (\ref{z_representation_lfbsde4}), we deduce%
\begin{equation}
\left\{
\begin{array}
[c]{ccl}%
Y_{T-1} & = & G_{T-1}X_{T-1}+g_{T-1},\\
Z_{T-1} & = & H_{T-1}X_{T-1}+h_{T-1},
\end{array}
\right.  \label{c3_eq_w_fbsde_linear_2_yz}%
\end{equation}
where,%
\[
\left\{
\begin{array}
[c]{ccl}%
G_{T-1} & = & \left(  P_{T},0\right)  \left[  \Gamma_{T-1}\left(
P_{T}\right)  \right]  ^{-1}\left(
\begin{array}
[c]{c}%
I+A_{T-1}\\
\overline{A}_{T-1}%
\end{array}
\right)  ,\\
H_{T-1} & = & \left(  0,P_{T}\right)  \left[  \Gamma_{T-1}\left(
P_{T}\right)  \right]  ^{-1}\left(
\begin{array}
[c]{c}%
I+A_{T-1}\\
\overline{A}_{T-1}%
\end{array}
\right)  ,\\
g_{T-1} & = & \left[  I+\left(  P_{T},0\right)  \left[  \Gamma_{T-1}\left(
P_{T}\right)  \right]  ^{-1}\left(
\begin{array}
[c]{c}%
B_{T-1}\\
\overline{B}_{T-1}%
\end{array}
\right)  \right]  \mathbb{E}\left[  p_{T}|\mathcal{F}_{T-1}\right] \\
&  & +\left(  P_{T},0\right)  \left[  \Gamma_{T-1}\left(  P_{T}\right)
\right]  ^{-1}\left[  \left(
\begin{array}
[c]{c}%
C_{T-1}\\
\overline{C}_{T-1}%
\end{array}
\right)  \mathbb{E}\left[  p_{T}\left(  \Delta W_{T-1}\right)  |\mathcal{F}%
_{T-1}\right]  +\left(
\begin{array}
[c]{c}%
D_{T-1}\\
\overline{D}_{T-1}%
\end{array}
\right)  \right]  ,\\
h_{T-1} & = & \left(  0,P_{T}\right)  \left[  \Gamma_{T-1}\left(
P_{T}\right)  \right]  ^{-1}\left[  \left(
\begin{array}
[c]{c}%
B_{T-1}\\
\overline{B}_{T-1}%
\end{array}
\right)  \mathbb{E}\left[  p_{T}|\mathcal{F}_{T-1}\right]  +\left(
\begin{array}
[c]{c}%
D_{T-1}\\
\overline{D}_{T-1}%
\end{array}
\right)  \right] \\
&  & +\left[  I+\left(  0,P_{T}\right)  \left[  \Gamma_{T-1}\left(
P_{T}\right)  \right]  ^{-1}\left(
\begin{array}
[c]{c}%
C_{T-1}\\
\overline{C}_{T-1}%
\end{array}
\right)  \right]  \mathbb{E}\left[  p_{T}\left(  \Delta W_{T-1}\right)
|\mathcal{F}_{T-1}\right]  .
\end{array}
\right.
\]
Let
\[
\lambda_{T-1}=-\widehat{A}_{T-1}X_{T-1}+\left(  I-\widehat{B}_{T-1}\right)
Y_{T-1}-\widehat{C}_{T-1}Z_{T-1}-\widehat{D}_{T-1}.
\]
By (\ref{c3_eq_w_fbsde_linear_2_yz}),\ we have%
\[
\lambda_{T-1}=P_{T-1}X_{T-1}+p_{T-1}%
\]
where%
\begin{align*}
P_{T-1}  &  =-\widehat{A}_{T-1}+\left(  I-\widehat{B}_{T-1}\right)
G_{T-1}-\widehat{C}_{T-1}H_{T-1},\\
p_{T-1}  &  =\left(  I-\widehat{B}_{T-1}\right)  g_{T-1}-\widehat{C}%
_{T-1}h_{T-1}-\widehat{D}_{T-1}.
\end{align*}

Repeating the above procedure, we can obtain the result by backward induction.
This completes the proof.
\end{proof}

\begin{remark}
By Theorem \ref{solution theorem for linear fbsde_4}, the solvability of
FBS$\Delta$E (\ref{fbsde_linear_1d_4}) only depends on the coefficients of the
homogeneous terms.
\end{remark}

The following corollary will be used in the proof of the solvability of the
nonlinear FBS$\Delta$E (\ref{nonlinear_fbsde_c_method_0}).

\begin{corollary}
\label{c3_cor_w_fbsde_linear_2}For any $D,\overline{D}\in\mathcal{M}%
^{2}\left(  0,T-1;\mathbb{R}^{m}\right)  $, $\widehat{D}\in\mathcal{M}%
^{2}\left(  1,T;\mathbb{R}^{n}\right)  $, $g\in L^{2}\left(  \mathcal{F}%
_{T};\mathbb{R}^{n}\right)  $, the following linear FBS$\Delta$E%
\[
\left\{
\begin{array}
[c]{rcl}%
\Delta X_{t} & = & -\beta_{2}G^{\ast}Y_{t}+D_{t}+\left(  -\beta_{2}G^{\ast
}Z_{t}+\overline{D}_{t}\right)  \Delta W_{t},\\
\Delta Y_{t} & = & -\beta_{1}GX_{t+1}+\widehat{D}_{t+1}+Z_{t}\Delta
W_{t}+\Delta N_{t},\\
X_{0} & = & x_{0},\\
Y_{T} & = & GX_{T}+g.
\end{array}
\right.
\]
has a unique solution $\left(  X,Y,Z,N\right)  \in\mathcal{M}^{2}\left(
0,T;\mathbb{R}^{m}\right)  \times\mathcal{M}^{2}\left(  0,T;\mathbb{R}%
^{n}\right)  \times\mathcal{M}^{2}\left(  0,T-1;\mathbb{R}^{n}\right)
\times\mathcal{M}^{2}\left(  0,T;\mathbb{R}^{n}\right)  $.
\end{corollary}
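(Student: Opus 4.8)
The plan is to view the FBS$\Delta$E in the statement as a particular instance of the general linear system (\ref{fbsde_linear_1d_4}) and to apply the invertibility criterion of Theorem \ref{solution theorem for linear fbsde_4}. The equation corresponds to the choices $A_t=\overline{A}_t=0$, $B_t=-\beta_2 G^{\ast}$, $C_t=0$, $\overline{B}_t=0$, $\overline{C}_t=-\beta_2 G^{\ast}$, $\widehat{A}_t=-\beta_1 G$ and $\widehat{B}_t=\widehat{C}_t=0$ (so in particular $\widehat{C}_T=0$), together with the prescribed data $D,\overline{D},\widehat{D},g$, which lie in the spaces required by (\ref{fbsde_linear_1d_4}) by hypothesis; since all the homogeneous coefficients are deterministic, the structural conditions on the coefficients of (\ref{fbsde_linear_1d_4}) are met. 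With these coefficients the matrix of Theorem \ref{solution theorem for linear fbsde_4} becomes block diagonal,
\[
\Gamma_t(P_{t+1})=\left(\begin{array}{cc} I+\beta_2 G^{\ast}P_{t+1} & 0\\ 0 & I+\beta_2 G^{\ast}P_{t+1}\end{array}\right),
\]
and its recursion collapses to $P_t=\beta_1 G+P_{t+1}\left(I+\beta_2 G^{\ast}P_{t+1}\right)^{-1}$ with terminal value $P_T=(1+\beta_1)G$. Hence it suffices to prove that the $m\times m$ matrix $I+\beta_2 G^{\ast}P_{t+1}$ is invertible for each $t\in\{0,\dots,T-1\}$.

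The heart of the argument is a backward induction showing that $Q_t:=G^{\ast}P_t$ is symmetric and nonnegative definite; this gives $I+\beta_2 Q_t\succeq I$, in particular invertible, and simultaneously guarantees that the recursion defining $P_t$ never degenerates. For $t=T$ one has $Q_T=(1+\beta_1)G^{\ast}G$, which is symmetric and nonnegative definite. Assuming $Q_{t+1}$ is symmetric and nonnegative definite, multiply the recursion on the left by $G^{\ast}$; since $G^{\ast}P_{t+1}=Q_{t+1}$ this yields
\[
Q_t=\beta_1 G^{\ast}G+Q_{t+1}\left(I+\beta_2 Q_{t+1}\right)^{-1}.
\]
The matrices $Q_{t+1}$ and $\left(I+\beta_2 Q_{t+1}\right)^{-1}$ commute (both are functions of $Q_{t+1}$), so the second term is symmetric, and diagonalizing $Q_{t+1}$ shows that its eigenvalues are $\mu/(1+\beta_2\mu)\ge 0$, corresponding to the eigenvalues $\mu\ge 0$ of $Q_{t+1}$; hence it is nonnegative definite. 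Adding $\beta_1 G^{\ast}G\succeq 0$ shows that $Q_t$ is symmetric and nonnegative definite, closing the induction.

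Consequently $\Gamma_t(P_{t+1})$ is invertible (it is a deterministic matrix, so $P$-a.s.\ as well) for all $t\in\{0,\dots,T-1\}$, and Theorem \ref{solution theorem for linear fbsde_4} delivers the unique solution $(X,Y,Z,N)$ in the asserted space. The only genuine obstacle is exactly the point handled by the induction: making sure the Riccati-type recursion for $P_t$ stays well posed and that the relevant matrix remains (strictly) positive definite once the identity is added — the sign conditions $\beta_1,\beta_2\ge 0$ inherited from the monotone assumption are precisely what makes $Q_t\succeq 0$ propagate backward in time. If one prefers a cleaner invariant, the same conclusion follows from the slightly stronger structural identity $P_t=\Pi_t G$ with $\Pi_t$ symmetric positive definite, obtained via the push-through identity $G\left(I+\beta_2 G^{\ast}\Pi_t G\right)^{-1}=\left(I+\beta_2 GG^{\ast}\Pi_t\right)^{-1}G$ together with $\Pi_t\left(I+\beta_2 GG^{\ast}\Pi_t\right)^{-1}=\left(\Pi_t^{-1}+\beta_2 GG^{\ast}\right)^{-1}$, but this refinement is not needed.
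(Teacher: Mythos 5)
Your proof is correct and follows the same route as the paper: specialize the coefficients of the general linear FBS$\Delta$E, observe that $\Gamma_t(P_{t+1})$ is block diagonal with blocks $I+\beta_2 G^{\ast}P_{t+1}$ and that $P_t=\beta_1 G+P_{t+1}(I+\beta_2 G^{\ast}P_{t+1})^{-1}$ with $P_T=(1+\beta_1)G$, then invoke Theorem \ref{solution theorem for linear fbsde_4}. The only difference is that where the paper merely asserts ``it is easy to check'' that $\Gamma_{t-1}(P_t)$ is invertible, you supply the actual verification via the backward induction showing $G^{\ast}P_t\succeq 0$, which is a welcome filling-in of the omitted detail.
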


\begin{proof}
For this special case of (\ref{fbsde_linear_1d_4}), the coefficients are%
\begin{align*}
A_{t}  &  =\overline{A}_{t}=\overline{B}_{t}=\widehat{B}_{t}=C_{t}%
=\widehat{C}_{t}=0,\\
B_{t}  &  =\overline{C}_{t}=-\beta_{2}G^{\ast}\\
\widehat{A}_{t}  &  =-\beta_{1}G\\
G  &  =G.
\end{align*}
Then we have%
\[
\Gamma_{t}\left(  P_{t+1}\right)  =I-\left(
\begin{array}
[c]{cc}%
B_{t}P_{t+1} & C_{t}P_{t+1}\\
\overline{B}_{t}P_{t+1} & \overline{C}_{t}P_{t+1}%
\end{array}
\right)  =\left(
\begin{array}
[c]{cc}%
I+\beta_{2}G^{\ast}P_{t+1} & 0\\
0 & I+\beta_{2}G^{\ast}P_{t+1}%
\end{array}
\right)
\]
and%
\[%
\begin{array}
[c]{rcl}%
P_{t} & = & -\widehat{A}_{t}+\left(  \left(  I-\widehat{B}_{t}\right)
P_{t+1},-\widehat{C}_{t}P_{t+1}\right)  \left[  \Gamma_{t}\left(
P_{t+1}\right)  \right]  ^{-1}\left(
\begin{array}
[c]{c}%
I+A_{t}\\
\overline{A}_{t}%
\end{array}
\right) \\
& = & \beta_{1}G+P_{t+1}\left(  I+\beta_{2}G^{\ast}P_{t+1}\right)  ^{-1}.
\end{array}
\]
Since $P_{T}=\left(  1+\beta_{1}\right)  G$, it is easy to check that $\forall
t\in\left\{  1,...,T\right\}  $,\ $\Gamma_{t-1}\left(  P_{t}\right)  $ is
invertible. Thus, the above FBS$\Delta$E have a unique solution. This
completes the proof.
\end{proof}

\section{Solvability of nonlinear FBS$\Delta$Es}

In this section we study the solvability of\ nonlinear FBS$\Delta
$E\ (\ref{nonlinear_fbsde_c_method_0}).

\begin{theorem}
\label{Thm-nonlinear}Under Assumption \ref{assu-nonlinear}, FBS$\Delta
$E\thinspace(\ref{nonlinear_fbsde_c_method_0}) has a unique adapted solution
$\left(  X,Y,Z,N\right)  \in\mathcal{M}^{2}\left(  0,T;\mathbb{R}^{m}\right)
\times\mathcal{M}^{2}\left(  0,T;\mathbb{R}^{n}\right)  \times\mathcal{M}%
^{2}\left(  0,T-1;\mathbb{R}^{n}\right)  \times\mathcal{M}^{2}\left(
0,T;\mathbb{R}^{n}\right)  $.
\end{theorem}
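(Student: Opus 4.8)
The plan is to apply the continuation method of \cite{hp95, pw99, y97}, using Corollary \ref{c3_cor_w_fbsde_linear_2} as the ``base case'' and the a priori estimates from Theorem \ref{bsde_result_2} (together with the forward estimates obtained from the structure of the $X$-equation) as the engine for the inductive step. For $\alpha\in[0,1]$ define the family of FBS$\Delta$Es obtained by interpolating between the decoupled linear system of Corollary \ref{c3_cor_w_fbsde_linear_2} (at $\alpha=0$) and the given nonlinear system \eqref{nonlinear_fbsde_c_method_0} (at $\alpha=1$); concretely, with
\[
b^{\alpha}=\alpha b+(1-\alpha)\bigl(-\beta_{2}G^{\ast}y\bigr),\quad
\sigma^{\alpha}=\alpha\sigma+(1-\alpha)\bigl(-\beta_{2}G^{\ast}z\bigr),\quad
f^{\alpha}=\alpha f+(1-\alpha)\bigl(\beta_{1}Gx\bigr),\quad
h^{\alpha}=\alpha h+(1-\alpha)Gx,
\]
plus exogenous terms $D_{t},\overline{D}_{t},\widehat{D}_{t+1},g$ appended to the four equations respectively. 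One checks once and for all that, under Assumption \ref{assu-nonlinear}, the interpolated coefficients satisfy the same Lipschitz bounds (with a uniform constant) and the same monotonicity inequalities (case $t\in\{1,\dots,T-1\}$, $t=T$, $t=0$, and the terminal condition) with the \emph{same} constants $\beta_{1},\beta_{2}$, because each monotonicity inequality is preserved under convex combination of the nonlinear part with the corresponding linear part and addition of exogenous terms drops out of the difference. The claim to prove by induction on a finite chain $0=\alpha_{0}<\alpha_{1}<\dots<\alpha_{k}=1$ is: \emph{if} the FBS$\Delta$E with parameter $\alpha_{0}$ is uniquely solvable for every choice of exogenous $(D,\overline{D},\widehat{D},g)$, \emph{then} so is the one with parameter $\alpha_{0}+\delta$ for a fixed step $\delta>0$ depending only on the Lipschitz constant, $\beta_{1},\beta_{2}$ and $T$; Corollary \ref{c3_cor_w_fbsde_linear_2} furnishes the hypothesis at $\alpha_{0}=0$.

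For the inductive step, fix $\alpha$ for which solvability (with arbitrary exogenous terms) holds and set up a contraction map on $\mathcal{M}^{2}(0,T;\mathbb{R}^{m})\times\mathcal{M}^{2}(0,T;\mathbb{R}^{n})\times\mathcal{M}^{2}(0,T-1;\mathbb{R}^{n})$: given an input $(X^{j},Y^{j},Z^{j})$, let $(X^{j+1},Y^{j+1},Z^{j+1},N^{j+1})$ solve the $\alpha$-system with exogenous perturbation equal to $\delta$ times the nonlinear/linear defect evaluated at the input, which is solvable by hypothesis. One then estimates the difference of two successive iterates. The forward equation for $\widehat{X}=X^{j+1}-X^{j+1,\text{prev}}$ gives, via $|\Delta X_{t}|^{2}\le$ (cross terms) and $\mathbb{E}|\Delta W_{t}|^{2}=1$, a bound on $\sum_{t}\mathbb{E}|\widehat{X}_{t}|^{2}$ in terms of $\sum_{t}\mathbb{E}(|\widehat{Y}_{t}|^{2}+\|\widehat{Z}_{t}\|^{2})$ and $\delta^{2}$ times the previous difference; the backward equation, through Corollary \ref{bsde_result_3}, gives a bound on $\sum_{t}\mathbb{E}(|\widehat{Y}_{t}|^{2}+\|\widehat{Z}_{t}\|^{2}+|\Delta\widehat{N}_{t}|^{2})$ in terms of $\sum_{t}\mathbb{E}|\widehat{X}_{t}|^{2}$ and the same $\delta^{2}$-term. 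The crucial point where monotonicity enters is an energy identity: applying the discrete product rule \eqref{ito formula} to $\langle G\widehat{X}_{t},\widehat{Y}_{t}\rangle$ (or rather to $\langle\widehat{X}_{t+1},G^{\ast}\widehat{Y}_{t}\rangle$, so the time subscripts match as the introduction explains), summing from $0$ to $T-1$, taking expectation, and using the boundary term $\langle h^{\alpha}(X_{T})-h^{\alpha}(X'_{T}),G\widehat{X}_{T}\rangle\ge0$ together with the four monotonicity inequalities, produces
\[
\beta_{1}\sum_{t}\mathbb{E}\|G\widehat{X}_{t}\|^{2}+\beta_{2}\sum_{t}\mathbb{E}\bigl(\|G^{\ast}\widehat{Y}_{t}\|^{2}+\|G^{\ast}\widehat{Z}_{t}\|^{2}\bigr)\le C\delta\bigl(\text{previous difference}\bigr).
\]
Since $G$ is full rank, $\|G\widehat{X}\|\gtrsim\|\widehat{X}\|$ if $n\ge m$ and $\|G^{\ast}\widehat{Y}\|\gtrsim\|\widehat{Y}\|$, $\|G^{\ast}\widehat{Z}\|\gtrsim\|\widehat{Z}\|$ if $m\ge n$; combining with the forward and backward estimates above, and using the convention $\beta_{1}>0$ when $n>m$ and $\beta_{2}>0$ when $m>n$ (the case $n=m$ being handled by either one being positive), one controls $\sum_{t}\mathbb{E}(|\widehat{X}_{t}|^{2}+|\widehat{Y}_{t}|^{2}+\|\widehat{Z}_{t}\|^{2})$ by $C\delta$ times the previous difference. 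Choosing $\delta=1/(2C)$ makes the iteration a contraction on the product $\mathcal{M}^{2}$ space; its fixed point is the unique solution of the $(\alpha+\delta)$-system, and the accompanying $N$ is recovered from the backward equation and is strongly orthogonal to $W$ by Theorem \ref{bsde_result_l}.

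Iterating the step $\delta=1/(2C)$ finitely many times reaches $\alpha=1$, i.e.\ the original FBS$\Delta$E \eqref{nonlinear_fbsde_c_method_0} (take all exogenous terms zero), giving existence and uniqueness of $(X,Y,Z,N)$ in the stated spaces. \textbf{The main obstacle} I expect is the energy identity: in continuous time the $dt\cdot dt$ cross term vanishes, but here the naive expansion $\Delta\langle\widehat X_t,G^*\widehat Y_t\rangle=\langle\Delta\widehat X_t,G^*\widehat Y_t\rangle+\langle\widehat X_t,G^*\Delta\widehat Y_t\rangle+\langle\Delta\widehat X_t,G^*\Delta\widehat Y_t\rangle$ carries an extra term, so one must use form \eqref{ito formula}, namely $\Delta\langle\widehat X_t,G^*\widehat Y_t\rangle=\langle\widehat X_{t+1},G^*\Delta\widehat Y_t\rangle+\langle\Delta\widehat X_t,G^*\widehat Y_t\rangle$, and then carefully match $\langle\widehat X_{t+1},G^*(-f^\alpha\text{-difference})\rangle$ against the $t=T$ and $t\in\{1,\dots,T-1\}$ monotonicity inequalities, $\langle\Delta\widehat X_t,\cdot\rangle$-type terms against the $t=0$ and generic-$t$ inequalities, while keeping track of the martingale-increment terms $\langle\sigma^\alpha\Delta W_t,\cdot\rangle$ and $\langle\cdot,Z_t\Delta W_t+\Delta N_t\rangle$ which vanish in conditional expectation (using $\mathbb{E}[\Delta W_t\mid\mathcal F_t]=0$, $\mathbb{E}[\Delta W_t(\Delta W_t)^*]=I$, and strong orthogonality of $N$ to $W$). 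Getting the bookkeeping of these discrete cross terms exactly right — so that the signs line up to give a clean dissipative estimate — is the technical heart of the argument; the rest is the standard continuation bookkeeping.
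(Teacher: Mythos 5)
Your proposal is correct and follows essentially the same route as the paper: the same convex interpolation $b^{\alpha},\sigma^{\alpha},f^{\alpha},h^{\alpha}$ with Corollary \ref{c3_cor_w_fbsde_linear_2} as the base case, the same contraction map (solve the $\alpha_{0}$-system with exogenous perturbation $\delta$ times the defect at the input), and the same energy identity via the product rule \eqref{ito formula} combined with the forward and backward a priori estimates to get a step size $\delta_{0}$ depending only on $\beta_{1},\beta_{2},G,c,T$. The one cosmetic difference is that the paper also gives a separate direct uniqueness argument for \eqref{nonlinear_fbsde_c_method_0} using the same monotonicity computation, whereas you extract uniqueness from the fixed-point iteration; both are valid.
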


We first give the proof of the uniqueness.

\begin{proof}
Suppose $U=\left(  X,Y,Z,N\right)  $ and $U^{^{\prime}}=\left(  X^{^{\prime}%
},Y^{^{\prime}},Z^{^{\prime}},N^{^{\prime}}\right)  $ are two solutions of
FBS$\Delta$E\thinspace(\ref{nonlinear_fbsde_c_method_0}). Define%
\[
\left(  \widehat{X},\widehat{Y},\widehat{Z},\widehat{N}\right)  =\left(
X-X^{^{\prime}},Y-Y^{^{\prime}},Z-Z^{^{\prime}},N-N^{^{\prime}}\right)  ,
\]
and%
\begin{align*}
\widehat{b}\left(  t\right)   &  =b\left(  t,X_{t},Y_{t},Z_{t}\right)
-b\left(  t,X_{t}^{\prime},Y_{t}^{\prime},Z_{t}^{^{\prime}}\right)  ,\\
\widehat{\sigma}\left(  t\right)   &  =\sigma\left(  t,X_{t},Y_{t}%
,Z_{t}\right)  -\sigma\left(  t,X_{t}^{\prime},Y_{t}^{\prime},Z_{t}^{\prime
}\right)  ,\\
\widehat{f}\left(  t\right)   &  =f\left(  t,X_{t},Y_{t},Z_{t}\right)
-f\left(  t,X_{t}^{\prime},Y_{t}^{^{\prime}},Z_{t}^{^{\prime}}\right)  .
\end{align*}

For $t\in\left\{  0,1,...,T-1\right\}  $, we have%
\[%
\begin{array}
[c]{cl}
& \Delta\left\langle G\widehat{X}_{t},\widehat{Y}_{t}\right\rangle \\
= & \left\langle G\widehat{X}_{t+1},\Delta\widehat{Y}_{t}\right\rangle
+\left\langle G\Delta\widehat{X}_{t},\widehat{Y}_{t}\right\rangle \\
= & \left\langle G\widehat{X}_{t+1},-\widehat{f}\left(  t+1\right)
\right\rangle +\left\langle G\widehat{X}_{t}+G\Delta\widehat{X}_{t}%
,\widehat{Z}_{t}\Delta W_{t}\right\rangle +\left\langle G\widehat{X}%
_{t}+G\Delta\widehat{X}_{t},\Delta\widehat{N}_{t}\right\rangle \\
& +\left\langle G\widehat{b}\left(  t\right)  ,\widehat{Y}_{t}\right\rangle
+\left\langle G\widehat{\sigma}\left(  t\right)  \Delta W_{t},\widehat{Y}%
_{t}\right\rangle \\
= & \left\langle G\widehat{X}_{t+1},-\widehat{f}\left(  t+1\right)
\right\rangle +\left\langle G\widehat{\sigma}\left(  t\right)  \Delta
W_{t},\widehat{Z}_{t}\Delta W_{t}\right\rangle +\left\langle G\widehat{b}%
\left(  t\right)  ,\widehat{Y}_{t}\right\rangle +\Phi_{t}%
\end{array}
\]
where%
\[
\Phi_{t}=\left\langle G\left(  \widehat{X}_{t}+\widehat{b}\left(  t\right)
\right)  ,\widehat{Z}_{t}\Delta W_{t}\right\rangle +\left\langle G\left(
\widehat{X}_{t}+\Delta\widehat{X}_{t}\right)  ,\Delta\widehat{N}%
_{t}\right\rangle +\left\langle G\widehat{\sigma}\left(  t\right)  \Delta
W_{t},\widehat{Y}_{t}\right\rangle .
\]
Since $W$, $N$, $N^{^{\prime}}$ are martingale process and $N$, $N^{^{\prime}%
}$ are strongly orthogonal to $W$, we have $\mathbb{E}\left[  \Phi
_{t}|\mathcal{F}_{t}\right]  =0$. Notice that%
\[
\mathbb{E}\left\langle G\widehat{\sigma}\left(  t\right)  \Delta
W_{t},\widehat{Z}_{t}\Delta W_{t}\right\rangle =\mathbb{E}\left[  \left\langle
G\widehat{\sigma}\left(  t\right)  ,\widehat{Z}_{t}\right\rangle
\mathbb{E}\left[  \left(  \Delta W_{t}\right)  ^{2}|\mathcal{F}_{t}\right]
\right]  =\mathbb{E}\left\langle G\widehat{\sigma}\left(  t\right)
,\widehat{Z}_{t}\right\rangle .
\]
Then,%
\[%
\begin{array}
[c]{cl}
& \mathbb{E}\left\langle G\left(  X_{T}-X_{T}^{\prime}\right)  ,h\left(
X_{T}\right)  -h\left(  X_{T}^{\prime}\right)  \right\rangle \\
= & \mathbb{E}\left\langle G\widehat{X}_{T},\widehat{Y}_{T}\right\rangle \\
= & \mathbb{E}\sum\limits_{t=0}^{T-1}\Delta\left\langle G\widehat{X}%
_{t},\widehat{Y}_{t}\right\rangle \\
= & \mathbb{E}\left[  \sum\limits_{t=0}^{T-1}\left\langle G\widehat{X}%
_{t+1},-\widehat{f}\left(  t+1\right)  \right\rangle +\sum\limits_{t=0}%
^{T-1}\left\langle G\widehat{\sigma}\left(  t\right)  ,\widehat{Z}%
_{t}\right\rangle +\sum\limits_{t=0}^{T-1}\left\langle G\widehat{b}\left(
t\right)  ,\widehat{Y}_{t}\right\rangle \right] \\
= & \mathbb{E}\left[  \sum\limits_{t=1}^{T-1}\left\langle A\left(
t,\Lambda\right)  -A\left(  t,\Lambda^{^{\prime}}\right)  ,\Lambda
-\Lambda^{^{\prime}}\right\rangle +\left\langle G\widehat{b}\left(  0\right)
,\widehat{Y}_{0}\right\rangle +\left\langle G\widehat{\sigma}\left(  0\right)
,\widehat{Z}_{0}\right\rangle +\left\langle G\widehat{X}_{T},-\widehat{f}%
\left(  T\right)  \right\rangle \right]  .
\end{array}
\]
Due to the monotone condition, we obtain%
\[%
\begin{array}
[c]{ccl}%
0 & \leq & \mathbb{E}\left\langle G\left(  X_{T}-X_{T}^{\prime}\right)
,h\left(  X_{T}\right)  -h\left(  X_{T}^{\prime}\right)  \right\rangle \\
& = & \mathbb{E}\left[  \sum\limits_{t=1}^{T-1}\left\langle A\left(
t,\Lambda\right)  -A\left(  t,\Lambda^{^{\prime}}\right)  ,\Lambda
-\Lambda^{^{\prime}}\right\rangle +\left\langle \widehat{b}\left(  0\right)
,\widehat{Y}_{0}\right\rangle +\left\langle \widehat{\sigma}\left(  0\right)
,\widehat{Z}_{0}\right\rangle +\left\langle \widehat{X}_{T},-\widehat{f}%
\left(  T\right)  \right\rangle \right] \\
& \leq & -\beta_{1}\mathbb{E}\sum\limits_{t=0}^{T}\left\vert G\left(
X_{t}-X_{t}^{^{\prime}}\right)  \right\vert ^{2}-\beta_{2}\mathbb{E}%
\sum\limits_{t=0}^{T-1}\left\vert G^{\ast}\left(  Y_{t}-Y_{t}^{^{\prime}%
}\right)  \right\vert ^{2}-\beta_{2}\mathbb{E}\sum\limits_{t=0}^{T-1}%
\left\vert G^{\ast}\left(  Z_{t}-Z_{t}^{^{\prime}}\right)  \right\vert ^{2}.
\end{array}
\]

Consider the case where $m<n$. In this case $\beta_{1}>0$, then we obtain
$X_{t}-X_{t}^{^{\prime}}=0$, $P-a.s.$ for $t\in\left\{  0,...,T\right\}  $.
Thus from the uniqueness of BS$\Delta$E, it follows that $Y=Y^{^{\prime}}$,
$Z=Z^{^{\prime}}$ and $N=N^{^{\prime}}$.

Consider the case where $m>n$. In this case $\beta_{2}>0$, then we obtain
$Y_{t}-Y_{t}^{^{\prime}}=0$, $Z_{t}-Z_{t}^{^{\prime}}=0$, $P-a.s.$ for
$t\in\left\{  0,...,T-1\right\}  $. Thus from the uniqueness of the forward
equation, we have $X=X^{^{\prime}}$. In particular, $h\left(  X_{T}\right)
=h\left(  X_{T}^{\prime}\right)  $. Thus we have $Y_{T}=Y_{T}^{^{\prime}}$,
$P-a.s.$.\ Also it is easy to check that $\mathbb{E}\sum_{t=0}^{T}\left\vert
N_{t}-N_{t}^{^{\prime}}\right\vert ^{2}=0$.\ Hence $U=U^{^{\prime}}$.

Similarly to the above two cases, the result can be obtained easily in the
case $m=n$.\ This completes the proof of the uniqueness.
\end{proof}

In order to prove the existence part of Theorem \ref{Thm-nonlinear}, we
introduce the following family of FBS$\Delta$Es parameterized by $\alpha
\in\left[  0,1\right]  $:%
\begin{equation}
\left\{
\begin{array}
[c]{rcl}%
\Delta X_{t} & = & b^{\alpha}\left(  t,X_{t},Y_{t},Z_{t}\right)  +b_{0}\left(
t\right)  +\left[  \sigma^{\alpha}\left(  t,X_{t},Y_{t},Z_{t}\right)
+\sigma_{0}\left(  t\right)  \right]  \Delta W_{t},\\
\Delta Y_{T-1} & = & -g^{\alpha}\left(  X_{T},Y_{T}\right)  -f_{0}\left(
T\right)  +Z_{T-1}\Delta W_{T-1}+\Delta N_{T-1},\\
\Delta Y_{t} & = & -f^{\alpha}\left(  t+1,X_{t+1},Y_{t+1},Z_{t+1}\right)
-f_{0}\left(  t+1\right)  +Z_{t}\Delta W_{t}+\Delta N_{t},\\
X_{0} & = & x_{0},\\
Y_{T} & = & h^{\alpha}\left(  X_{T}\right)  +h_{0},
\end{array}
\right.  \label{nonlinear_fbsde_c_method_1}%
\end{equation}
where%
\begin{align*}
b^{\alpha}\left(  t,x,y,z\right)   &  =\alpha b\left(  t,x,y,z\right)
+\left(  1-\alpha\right)  \beta_{2}\left(  -G^{\ast}y\right)  ,\\
\sigma^{\alpha}\left(  t,x,y,z\right)   &  =\alpha\sigma\left(
t,x,y,z\right)  +\left(  1-\alpha\right)  \beta_{2}\left(  -G^{\ast}z\right)
,\\
f^{\alpha}\left(  t,x,y,z\right)   &  =\alpha f\left(  t,x,y,z\right)
+\left(  1-\alpha\right)  \beta_{1}Gx,\\
h^{a}\left(  x\right)   &  =\alpha h\left(  x\right)  +\left(  1-\alpha
\right)  Gx.
\end{align*}
Clearly, when $\alpha=1$, FBS$\Delta$E (\ref{nonlinear_fbsde_c_method_1})
becomes FBS$\Delta$E (\ref{nonlinear_fbsde_c_method_0}). On the other hand,
when $\alpha=0$, FBS$\Delta$E (\ref{nonlinear_fbsde_c_method_1}) becomes the
following linear equation:%
\begin{equation}
\left\{
\begin{array}
[c]{rcl}%
\Delta X_{t} & = & -\beta_{2}G^{\ast}Y_{t}+b_{0}\left(  t\right)  +\left[
-\beta_{2}G^{\ast}Z_{t}+\sigma_{0}\left(  t\right)  \right]  \Delta W_{t},\\
\Delta Y_{t} & = & -\beta_{1}GX_{t+1}-f_{0}\left(  t+1\right)  +Z_{t}\Delta
W_{t}+\Delta N_{t},\\
X_{0} & = & x_{0},\\
Y_{T} & = & GX_{T}+h_{0}.
\end{array}
\right.  \label{nonlinear_fbsde_c_method_2}%
\end{equation}

By Corollary \ref{c3_cor_w_fbsde_linear_2}, we know that for any $b_{0},$
$\sigma_{0}\in\mathcal{M}^{2}\left(  0,T-1;\mathbb{R}^{n}\right)  $, $f_{0}%
\in\mathcal{M}^{2}\left(  1,T;\mathbb{R}^{n}\right)  $ and $h_{0}\in
L^{2}\left(  \mathcal{F}_{T};\mathbb{R}^{n}\right)  $, the linear FBS$\Delta
$E\ (\ref{nonlinear_fbsde_c_method_2}) has a unique solution.

To complete the proof, we need the following lemma.

\begin{lemma}
\label{nonlinear_fbsde_c_method_lemma}Assume that there exists an $\alpha
_{0}\in\left[  0,1\right)  $ such that for any $b_{0},$ $\sigma_{0}%
\in\mathcal{M}^{2}\left(  0,T-1;\mathbb{R}^{n}\right)  ,$ $f_{0}\in
\mathcal{M}^{2}\left(  1,T;\mathbb{R}^{n}\right)  $ and $h_{0}\in L^{2}\left(
\mathcal{F}_{T};\mathbb{R}^{n}\right)  $, FBS$\Delta$E
(\ref{nonlinear_fbsde_c_method_1}) has a unique solution. Then there exists
$\delta_{0}\in\left(  0,1\right)  $, which depends on $\beta_{1},\beta_{2}$
and $T$, such that for any $\alpha\in\left[  \alpha_{0},\alpha_{0}+\delta
_{0}\right]  $, any $b_{0},$ $\sigma_{0}\in\mathcal{M}^{2}\left(
0,T-1;\mathbb{R}^{n}\right)  ,$ $f_{0}\in\mathcal{M}^{2}\left(  1,T;\mathbb{R}%
^{n}\right)  $ and $h_{0}\in L^{2}\left(  \mathcal{F}_{T};\mathbb{R}%
^{n}\right)  $, FBS$\Delta$E (\ref{nonlinear_fbsde_c_method_1}) has a unique solution.
\end{lemma}

\begin{proof}
Observe that%
\begin{align*}
b^{\alpha_{0}+\delta}\left(  t,x,y,z\right)   &  =b^{\alpha_{0}}\left(
t,x,y,z\right)  +\delta\left(  \beta_{2}G^{\ast}y+b\left(  t,x,y,z\right)
\right)  ,\\
\sigma^{\alpha_{0}+\delta}\left(  t,x,y,z\right)   &  =\sigma^{\alpha_{0}%
}\left(  t,x,y,z\right)  +\delta\left(  \beta_{2}G^{\ast}z+\sigma\left(
t,x,y,z\right)  \right)  ,\\
f^{\alpha_{0}+\delta}\left(  t,x,y,z\right)   &  =f^{\alpha_{0}}\left(
t,x,y,z\right)  +\delta\left(  -\beta_{1}Gx+f\left(  t,x,y,z\right)  \right)
,\\
h^{\alpha_{0}+\delta}\left(  x\right)   &  =h^{\alpha_{0}}\left(  x\right)
+\delta\left(  -Gx+h\left(  x\right)  \right)  .
\end{align*}
According to the assumption, for each triple $u=\left(  x_{s},y_{s}%
,z_{s}\right)  \in\mathcal{M}^{2}\left(  0,T;\mathbb{R}^{m}\right)
\times\mathcal{M}^{2}\left(  0,T;\mathbb{R}^{n}\right)  \times\mathcal{M}%
^{2}\left(  0,T-1;\mathbb{R}^{n}\right)  $,\ there exists a unique triple
$U=\left(  X_{s},Y_{s},Z_{s}\right)  \in\mathcal{M}^{2}\left(  0,T;\mathbb{R}%
^{m}\right)  \times\mathcal{M}^{2}\left(  0,T;\mathbb{R}^{n}\right)
\times\mathcal{M}^{2}\left(  0,T-1;\mathbb{R}^{n}\right)  $ satisfying the
following equation
\begin{equation}
\left\{
\begin{array}
[c]{rcl}%
\Delta X_{t} & = & b^{\alpha_{0}}\left(  t,U_{t}\right)  +\delta\left(
\beta_{2}G^{\ast}y_{t}+b\left(  t,u_{t}\right)  \right)  +b_{0}\left(
t\right)  \\
&  & +\left[  \sigma^{\alpha_{0}}\left(  t,U_{t}\right)  +\delta\left(
\beta_{2}G^{\ast}z_{t}+\sigma\left(  t,u_{t}\right)  \right)  +\sigma
_{0}\left(  t\right)  \right]  \Delta W_{t},\\
\Delta Y_{t} & = & -f^{\alpha_{0}}\left(  t+1,U_{t+1}\right)  -\delta\left(
-\beta_{1}Gx_{t+1}+f\left(  t+1,u_{t+1}\right)  \right)  \\
&  & -f_{0}\left(  t+1\right)  +Z_{t}\Delta W_{t}+\Delta N_{t},\\
X_{0} & = & x_{0},\\
Y_{T} & = & h^{\alpha_{0}}\left(  X_{T}\right)  +\delta\left(  -Gx_{T}%
+h\left(  x_{T}\right)  \right)  +h_{0}.
\end{array}
\right.  \label{nonlinear_fbsde_c_method_eq2}%
\end{equation}

We are going to prove that the mapping defined by
\begin{align*}
I_{\alpha_{0}+\delta}\left(  u\right)   &  =U:\mathcal{M}^{2}\left(
0,T;\mathbb{R}^{m}\right)  \times\mathcal{M}^{2}\left(  0,T;\mathbb{R}%
^{n}\right)  \times\mathcal{M}^{2}\left(  0,T-1;\mathbb{R}^{n}\right) \\
&  \rightarrow\mathcal{M}^{2}\left(  0,T;\mathbb{R}^{m}\right)  \times
\mathcal{M}^{2}\left(  0,T;\mathbb{R}^{n}\right)  \times\mathcal{M}^{2}\left(
0,T-1;\mathbb{R}^{n}\right)
\end{align*}
is a contraction mapping.

Let $u^{^{\prime}}=\left(  x_{s}^{^{\prime}},y_{s}^{^{\prime}},z_{s}%
^{^{\prime}}\right)  \in\mathcal{M}^{2}\left(  0,T;\mathbb{R}^{m}\right)
\times\mathcal{M}^{2}\left(  0,T;\mathbb{R}^{n}\right)  \times\mathcal{M}%
^{2}\left(  0,T-1;\mathbb{R}^{n}\right)  $, and let $U^{^{\prime}}=\left(
X_{s}^{^{\prime}},Y_{s}^{^{\prime}},Z_{s}^{^{\prime}}\right)  =I_{\alpha
_{0}+\delta}\left(  u^{^{\prime}}\right)  $. We set $\widehat{u}%
=u-u^{^{\prime}}=\left(  \widehat{x},\widehat{y},\widehat{z}\right)  $,
$\widehat{U}=U-U^{^{\prime}}=\left(  \widehat{X},\widehat{Y},\widehat{Z}%
\right)  .$ Then we have%
\[%
\begin{array}
[c]{cl}
& \mathbb{E}\left[  \left\langle G\widehat{X}_{T},h^{\alpha_{0}}\left(
X_{T}\right)  -h^{\alpha_{0}}\left(  X_{T}^{^{\prime}}\right)  \right\rangle
+\left\langle G\widehat{X}_{T},\delta\left(  -G\left(  x_{T}-x_{T}^{\prime
}\right)  +h\left(  x_{T}\right)  -h\left(  x_{T}^{\prime}\right)  \right)
\right\rangle \right] \\
= & \mathbb{E}\left\langle G\widehat{X}_{T},\widehat{Y}_{T}\right\rangle \\
= & \mathbb{E}\left[  \sum\limits_{t=1}^{T-1}\left\langle A^{\alpha_{0}%
}\left(  t,U_{t}\right)  -A^{\alpha_{0}}\left(  t,U_{t}^{^{\prime}}\right)
,\widehat{U}_{t}\right\rangle +\left\langle Gb^{\alpha_{0}}\left(
0,U_{0}\right)  -Gb^{\alpha_{0}}\left(  0,U_{0}^{^{\prime}}\right)
,\widehat{Y}_{0}\right\rangle \right. \\
& +\left\langle G\sigma^{\alpha_{0}}\left(  0,U_{0}\right)  -G\sigma
^{\alpha_{0}}\left(  0,U_{0}^{^{\prime}}\right)  ,\widehat{Z}_{0}\right\rangle
-\left\langle G^{\ast}f^{\alpha_{0}}\left(  T,X_{T},Y_{T}\right)  -G^{\ast
}f^{\alpha_{0}}\left(  T,X_{T}^{\prime},Y_{T}^{\prime}\right)  ,\widehat{X}%
_{T}\right\rangle \\
& +\sum\limits_{t=1}^{T-1}\delta\left\langle A\left(  t,u_{t}\right)
-A\left(  t,u_{t}^{^{\prime}}\right)  +\widehat{u}_{t},\widehat{U}%
_{t}\right\rangle +\delta\left\langle Gb\left(  0,u_{0}\right)  -Gb\left(
0,u_{0}^{\prime}\right)  ,\widehat{Y}_{0}\right\rangle \\
& +\delta\left\langle G\sigma\left(  0,u_{0}\right)  -G\sigma\left(
0,u_{0}^{\prime}\right)  ,\widehat{Z}_{0}\right\rangle -\delta\left\langle
G^{\ast}f\left(  T,x_{T},y_{T}\right)  -G^{\ast}f\left(  T,x_{T}^{\prime
},y_{T}^{\prime}\right)  ,\widehat{X}_{T}\right\rangle \\
& +\sum\limits_{t=1}^{T-1}\delta\left(  \beta_{1}\left\langle G\widehat{x}%
_{t},G\widehat{X}_{t}\right\rangle +\beta_{2}\left\langle G^{\ast}%
\widehat{y}_{t},G^{\ast}\widehat{Y}_{t}\right\rangle +\beta_{2}\left\langle
G^{\ast}\widehat{z}_{t},G^{\ast}\widehat{Z}_{t}\right\rangle \right) \\
& \left.  +\delta\beta_{2}\left\langle G^{\ast}\widehat{y}_{0},G^{\ast
}\widehat{Y}_{0}\right\rangle +\delta\beta_{2}\left\langle G^{\ast}%
\widehat{z}_{0},G^{\ast}\widehat{Z}_{0}\right\rangle +\delta\beta
_{1}\left\langle G\widehat{x}_{T},G\widehat{X}_{T}\right\rangle \right]
\end{array}
\]
where $A^{\alpha_{0}}\left(  t,u\right)  =\left(
\begin{array}
[c]{c}%
-G^{\ast}f^{\alpha_{0}}\\
Gb^{\alpha_{0}}\\
G\sigma^{\alpha_{0}}%
\end{array}
\right)  \left(  t,u\right)  .$

By Assumption \ref{assu-nonlinear}, we obtain%
\begin{equation}%
\begin{array}
[c]{cl}
& \mathbb{E}\left[  \sum_{t=0}^{T-1}\left(  \beta_{1}\left\vert G\widehat{X}%
_{t}\right\vert ^{2}+\beta_{2}\left\vert G^{\ast}\widehat{Y}_{t}\right\vert
^{2}+\beta_{2}\left\vert G^{\ast}\widehat{Z}_{t}\right\vert ^{2}\right)
+\left(  \beta_{1}+1-\alpha_{0}\right)  \left\vert G\widehat{X}_{T}\right\vert
^{2}\right] \\
\leq & \delta K_{1}\mathbb{E}\left[  \sum_{t=0}^{T-1}\left(  \left\vert
\widehat{U}_{t}\right\vert ^{2}+\left\vert \widehat{u}_{t}\right\vert
^{2}\right)  +\left\vert \widehat{X}_{T}\right\vert ^{2}+\left\vert
\widehat{x}_{T}\right\vert ^{2}+\left\vert \widehat{y}_{T}\right\vert
^{2}\right]  ,
\end{array}
\label{nonlinear_fbsde_c_method_eq2_1}%
\end{equation}
where the constant $K_{1}$ depends on the constants $\beta_{1}$, $\beta_{2}$,
$G$ and $c.$

On the other hand, applying Corollary \ref{bsde_result_3} to the BS$\Delta$E
in (\ref{nonlinear_fbsde_c_method_eq2}), we can obtain%
\begin{equation}
\sum_{t=0}^{T-1}\mathbb{E}\left[  \left\vert \widehat{Y}_{t}\right\vert
^{2}+\left\Vert \widehat{Z}_{t}\right\Vert ^{2}+\left\vert \Delta
\widehat{N}_{t}\right\vert ^{2}\right]  \leq K_{2}\mathbb{E}\left[  \sum
_{t=1}^{T}\left\vert \widehat{X}_{t}\right\vert ^{2}+\delta\left(  \left\vert
\widehat{x}_{T}\right\vert ^{2}+\left\vert \widehat{y}_{T}\right\vert
^{2}+\sum_{t=1}^{T-1}\left\vert \widehat{u}_{t}\right\vert ^{2}\right)
\right]  ,\label{nonlinear_fbsde_c_method_eq3}%
\end{equation}
where the constant $K_{2}$ depends on the constants $\beta_{1}$, $G$, $c$ and
$T.$ According to the terminal condition,\ we have%
\begin{equation}
\mathbb{E}\left[  \left\vert \widehat{Y}_{T}\right\vert ^{2}\right]  \leq
K_{2}\mathbb{E}\left[  \left\vert \widehat{X}_{T}\right\vert ^{2}%
+\delta\left\vert \widehat{x}_{T}\right\vert ^{2}\right]
.\label{nonlinear_fbsde_c_method_eq4}%
\end{equation}

Besides, by applying the induction method to the S$\Delta$E in
(\ref{nonlinear_fbsde_c_method_eq2}), we can obtain%
\begin{equation}
\sum_{t=0}^{T}\mathbb{E}\left[  \left\vert \widehat{X}_{t}\right\vert
^{2}\right]  \leq K_{3}\mathbb{E}\left[  \sum_{t=1}^{T-1}\left(  \left\vert
\widehat{Y}_{t}\right\vert ^{2}+\left\vert \widehat{Z}_{t}\right\vert
^{2}\right)  +\delta\left(  \sum_{t=1}^{T-1}\left\vert \widehat{u}%
_{t}\right\vert ^{2}\right)  \right]  \label{nonlinear_fbsde_c_method_eq5}%
\end{equation}
where the constant $K_{3}$ depends on the constants $\beta_{2}$, $G$,\ $c$ and
$T.$

In the case $\beta_{1}>0$ (resp. $\beta_{2}>0$), combining equation
(\ref{nonlinear_fbsde_c_method_eq2_1}), (\ref{nonlinear_fbsde_c_method_eq3}),
(\ref{nonlinear_fbsde_c_method_eq4}) (resp. equation
(\ref{nonlinear_fbsde_c_method_eq2_1}), (\ref{nonlinear_fbsde_c_method_eq4}),
(\ref{nonlinear_fbsde_c_method_eq5})), we always have%
\[
\mathbb{E}\left[  \sum_{t=0}^{T-1}\left\vert \widehat{U}_{t}\right\vert
^{2}+\left\vert \widehat{X}_{T}\right\vert ^{2}+\left\vert \widehat{Y}%
_{T}\right\vert ^{2}\right]  \leq\delta K\mathbb{E}\left[  \sum_{t=0}%
^{T-1}\left\vert \widehat{u}_{t}\right\vert ^{2}+\left\vert \widehat{x}%
_{T}\right\vert ^{2}+\left\vert \widehat{y}_{T}\right\vert ^{2}\right]  ,
\]
where the constant $K$ depends only on\ $\beta_{1}$, $\beta_{2}$, $G$,\ $c$
and $T.$ We now choose $\delta_{0}=\frac{1}{2K}$. It is clear that, for each
fixed $\delta\in\left[  0,\delta_{0}\right]  $, the mapping $I_{\alpha
_{0}+\delta}$ is a contraction\ mapping in the sense that%
\[
\mathbb{E}\left[  \sum_{t=0}^{T-1}\left\vert \widehat{U}_{t}\right\vert
^{2}+\left\vert \widehat{X}_{T}\right\vert ^{2}+\left\vert \widehat{Y}%
_{T}\right\vert ^{2}\right]  \leq\frac{1}{2}\mathbb{E}\left[  \sum_{t=0}%
^{T-1}\left\vert \widehat{u}_{t}\right\vert ^{2}+\left\vert \widehat{x}%
_{T}\right\vert ^{2}+\left\vert \widehat{y}_{T}\right\vert ^{2}\right]  .
\]

It follows that this mapping has a unique fixed point $\left(  X^{\alpha
_{0}+\delta},Y^{\alpha_{0}+\delta},Z^{\alpha_{0}+\delta}\right)  $ which
satisfies equation (\ref{nonlinear_fbsde_c_method_1}) for $\alpha=\alpha
_{0}+\delta$. Besides, $\left\{  N_{t}^{\alpha_{0}+\delta}\right\}  _{t=0}%
^{T}$ can be obtained such that $\left(  X,Y,Z,N\right)  $ is the solution to
FBS$\Delta$E (\ref{nonlinear_fbsde_c_method_1}) with $\alpha=\alpha_{0}%
+\delta$. This completes the proof of this lemma.
\end{proof}

Now we complete the proof of the existence part of Theorem \ref{Thm-nonlinear}.

\begin{proof}
When $\alpha=0$, for any $b_{0},$ $\sigma_{0}\in\mathcal{M}^{2}\left(
0,T-1;\mathbb{R}^{m}\right)  ,$ $f_{0}\in\mathcal{M}^{2}\left(  1,T;\mathbb{R}%
^{n}\right)  $ and $h_{0}\in L^{2}\left(  \mathcal{F}_{T};\mathbb{R}%
^{n}\right)  $, there exists a unique solution to FBS$\Delta$%
E\ (\ref{nonlinear_fbsde_c_method_1}). According to Lemma
\ref{nonlinear_fbsde_c_method_lemma}, there exists a constant $\delta_{0}$
which only depends on $\beta_{1}$, $\beta_{2}$, $G$,\ $c$ and $T$, such that
for any $b_{0},$ $\sigma_{0}\in\mathcal{M}^{2}\left(  0,T-1;\mathbb{R}%
^{m}\right)  ,$ $f_{0}\in\mathcal{M}^{2}\left(  1,T;\mathbb{R}^{n}\right)  $
and $h_{0}\in L^{2}\left(  \mathcal{F}_{T};\mathbb{R}^{n}\right)  $,
FBS$\Delta$E (\ref{nonlinear_fbsde_c_method_1}) has a unique solution
for\ $\alpha\in\left[  0,\delta_{0}\right]  ,$ $\left[  \delta_{0},2\delta
_{0}\right]  ,...$ It turns out that FBS$\Delta$E
(\ref{nonlinear_fbsde_c_method_1}) has a unique solution when $\alpha=1$.
Taking $b_{0}\left(  \cdot\right)  =\sigma_{0}\left(  \cdot\right)
=f_{0}\left(  \cdot\right)  =0$ and $h_{0}=0$, we deduce that the solution of
FBS$\Delta$E (\ref{nonlinear_fbsde_c_method_0}) exists. This completes the
proof of the existence.
\end{proof}

\section{Appendix}

In this appendix, we give the proofs of Theorem \ref{bsde_result_l} and
\ref{bsde_result_2}.

\begin{proof}
[Proof of Theorem \ref{bsde_result_l}]We first prove the existence and
uniqueness of $\left(  Y_{T-1},Z_{T-1},\Delta N_{T-1}\right)  $. Due to
Assumption (\ref{generator_assumption}) and $\eta\in L^{2}\left(
\mathcal{F}_{T};\mathbb{R}^{n}\right)  $, we get $f\left(  T,\eta\right)  \in
L^{2}\left(  \mathcal{F}_{T};\mathbb{R}^{n}\right)  $. Here we omit the
variable $Z$ since $f$ is independent of $Z$ at time $T$. Then we have
$\mathbb{E}\left[  \left\vert \mathbb{E}\left[  \eta+f\left(  T,\eta\right)
|\mathcal{F}_{T-1}\right]  \right\vert ^{2}\right]  <\infty.$ Hence,
$\eta+f\left(  T,\eta\right)  -\mathbb{E}\left[  \eta+f\left(  T,\eta\right)
|\mathcal{F}_{T-1}\right]  $ is a square integrable martingale difference. So
it admits the Galtchouk-Kunita-Watanabe decomposition, which implies that
there exists $Z_{T-1}\in\mathcal{F}_{T-1}$, $Z_{T-1}\Delta W_{T-1}\in
L^{2}\left(  \mathcal{F}_{T};\mathbb{R}^{n}\right)  $, $\Delta N_{T-1}\in
L^{2}\left(  \mathcal{F}_{T};\mathbb{R}^{n}\right)  $ such that $\mathbb{E}%
\left[  \Delta N_{T-1}|\mathcal{F}_{T-1}\right]  =0$, $\mathbb{E}\left[
e_{i}^{\ast}\Delta N_{T-1}\left(  \Delta W_{T-1}\right)  ^{\ast}%
|\mathcal{F}_{T-1}\right]  =0$ and%
\begin{equation}
\eta+f\left(  T,\eta\right)  -\mathbb{E}\left[  \eta+f\left(  T,\eta\right)
|\mathcal{F}_{T-1}\right]  =Z_{T-1}\Delta W_{T-1}+\Delta N_{T-1}%
.\label{bsde_proof_1}%
\end{equation}
Moreover, $\Delta N_{T-1}$ is uniquely determined in this decomposition. For
fixed $i\in\left\{  1,2,...,n\right\}  $, premultiply the equation by
$e_{i}^{\ast}$, postmultiply the equation by $\left(  \Delta W_{T-1}\right)
^{\ast}$ and then take the $\mathcal{F}_{T-1}$ conditional expectation. This
yields that
\[
\mathbb{E}\left[  e_{i}^{\ast}\left(  \eta+f\left(  T,\eta\right)  \right)
\left(  \Delta W_{T-1}\right)  ^{\ast}|\mathcal{F}_{T-1}\right]  =e_{i}^{\ast
}Z_{T-1}%
\]
since $\mathbb{E}\left[  \Delta W_{T-1}\left(  \Delta W_{T-1}\right)  ^{\ast
}|\mathcal{F}_{T-1}\right]  =I$. Therefore, we get the unique $Z_{T-1}$ by%
\[
Z_{T-1}=\mathbb{E}\left[  \left(  \eta+f\left(  T,\eta\right)  \right)
\left(  \Delta W_{T-1}\right)  ^{\ast}|\mathcal{F}_{T-1}\right]
\]
and%
\[%
\begin{array}
[c]{ccl}%
\mathbb{E}\left[  \left\Vert Z_{T-1}\right\Vert ^{2}\right]   & \leq &
\mathbb{E}\left[  \mathbb{E}\left[  \left\vert \eta+f\left(  T,\eta\right)
\right\vert ^{2}|\mathcal{F}_{T-1}\right]  \mathbb{E}\left[  \left\vert
\left(  \Delta W_{T-1}\right)  \right\vert ^{2}|\mathcal{F}_{T-1}\right]
\right]  <\infty.
\end{array}
\]
It leads that $Y_{T-1}=\mathbb{E}\left[  \eta+f\left(  T,\eta\right)
|\mathcal{F}_{T-1}\right]  $ and $Y_{T-1}\in L^{2}\left(  \mathcal{F}%
_{T-1};\mathbb{R}^{n}\right)  $.

Then, by similar arguments as above, we can obtain the unique solution
$\left(  Y_{t},Z_{t},\Delta N_{t}\right)  \in L^{2}\left(  \mathcal{F}%
_{t};\mathbb{R}^{n}\right)  \times L^{2}\left(  \mathcal{F}_{t};\mathbb{R}%
^{n\times d}\right)  \times L^{2}\left(  \mathcal{F}_{t};\mathbb{R}%
^{n}\right)  $ for $t\in\left\{  0,1,...,T-2\right\}  .$ Moreover,%
\begin{align*}
Z_{t} &  =\mathbb{E}\left[  \left(  Y_{t+1}+f\left(  t+1,Y_{t+1}%
,Z_{t+1}\right)  \right)  \left(  \Delta W_{t}\right)  ^{\ast}|\mathcal{F}%
_{t}\right]  ,\\
Y_{t} &  =\mathbb{E}\left[  Y_{t+1}+f\left(  t+1,Y_{t+1},Z_{t+1}\right)
|\mathcal{F}_{t}\right]  .
\end{align*}
By taking the convention $N_{0}=0$ and letting $N_{t}=N_{0}+\sum_{s=0}%
^{t-1}\Delta N_{s}$, we have that (\ref{bsde_form1}) holds true for all
$t\in\left\{  0,1,...,T-1\right\}  $. Finally, since%
\begin{align*}
&  \mathbb{E}\left[  e_{i}^{\ast}N_{t}\left(  W_{t}\right)  ^{\ast
}|\mathcal{F}_{t-1}\right]  \\
&  =e_{i}^{\ast}\sum_{s=0}^{t-2}\Delta N_{s}\mathbb{E}\left[  \left(
W_{t}\right)  ^{\ast}|\mathcal{F}_{t-1}\right]  +\mathbb{E}\left[  e_{i}%
^{\ast}\Delta N_{t-1}\left(  W_{t-1}+\Delta W_{t-1}\right)  ^{\ast
}|\mathcal{F}_{t-1}\right]  \\
&  =e_{i}^{\ast}N_{t-1}\left(  W_{t-1}\right)  ^{\ast},
\end{align*}
we conclude that $N$ is strongly orthogonal to $W$. This completes the proof.
\end{proof}

\begin{proof}
[Proof of Theorem \ref{bsde_result_2}]Note that%
\[%
\begin{array}
[c]{cl}
& \Delta\left\langle Y_{t},Y_{t}\right\rangle \\
= & \left\langle Y_{t+1},\Delta Y_{t}\right\rangle +\left\langle \Delta
Y_{t},Y_{t}\right\rangle \\
= & \left\langle Y_{t+1},-f\left(  t+1,Y_{t+1},Z_{t+1}\right)  \right\rangle
+\left\langle Y_{t}+\Delta Y_{t},Z_{t}\Delta W_{t}+\Delta N_{t}\right\rangle
\\
& +\left\langle -f\left(  t+1,Y_{t+1},Z_{t+1}\right)  +Z_{t}\Delta
W_{t}+\Delta N_{t},Y_{t}\right\rangle \\
= & \left\langle Y_{t+1},-f\left(  t+1,Y_{t+1},Z_{t+1}\right)  \right\rangle
+\left\langle Y_{t},Z_{t}\Delta W_{t}+\Delta N_{t}\right\rangle \\
& +\left\langle -f\left(  t+1,Y_{t+1},Z_{t+1}\right)  +Z_{t}\Delta
W_{t}+\Delta N_{t},Z_{t}\Delta W_{t}+\Delta N_{t}\right\rangle \\
& +\left\langle -f\left(  t+1,Y_{t+1},Z_{t+1}\right)  ,Y_{t}\right\rangle
+\left\langle Y_{t},Z_{t}\Delta W_{t}+\Delta N_{t}\right\rangle \\
= & 2\left\langle Y_{t+1},-f\left(  t+1,Y_{t+1},Z_{t+1}\right)  \right\rangle
-\left\langle f\left(  t+1,Y_{t+1},Z_{t+1}\right)  ,f\left(  t+1,Y_{t+1}%
,Z_{t+1}\right)  \right\rangle \\
& +\left\langle Z_{t}\Delta W_{t},Z_{t}\Delta W_{t}\right\rangle +\left\langle
\Delta N_{t},\Delta N_{t}\right\rangle +2\left\langle Y_{t},Z_{t}\Delta
W_{t}+\Delta N_{t}\right\rangle +2\left\langle Z_{t}\Delta W_{t},\Delta
N_{t}\right\rangle .
\end{array}
\]
Then,%
\[%
\begin{array}
[c]{cl}
& \mathbb{E}\left[  \left\vert Y_{t+1}\right\vert ^{2}-\left\vert
Y_{t}\right\vert ^{2}\right]  =\mathbb{E}\left[  \Delta\left\langle
Y_{t},Y_{t}\right\rangle \right]  \\
= & -2\mathbb{E}\left[  \left\langle Y_{t+1},f\left(  t+1,Y_{t+1}%
,Z_{t+1}\right)  \right\rangle \right]  -\mathbb{E}\left[  \left\vert f\left(
t+1,Y_{t+1},Z_{t+1}\right)  \right\vert ^{2}\right]  \\
& +\mathbb{E}\left[  \left\Vert Z_{t}\right\Vert ^{2}+\left\vert \Delta
N_{t}\right\vert ^{2}\right]  .
\end{array}
\]
Thus,%
\begin{align*}
&  \mathbb{E}\left[  \left\vert Y_{t}\right\vert ^{2}+\left\Vert
Z_{t}\right\Vert ^{2}+\left\vert \Delta N_{t}\right\vert ^{2}\right]  \\
&  =\mathbb{E}\left[  \left\vert Y_{t+1}\right\vert ^{2}\right]
+2\mathbb{E}\left[  \left\langle Y_{t+1},f\left(  t+1,Y_{t+1},Z_{t+1}\right)
\right\rangle \right]  +\mathbb{E}\left[  \left\vert f\left(  t+1,Y_{t+1}%
,Z_{t+1}\right)  \right\vert ^{2}\right]  \\
&  \leq2\mathbb{E}\left[  \left\vert Y_{t+1}\right\vert ^{2}\right]
+2\mathbb{E}\left[  \left\vert f\left(  t+1,Y_{t+1},Z_{t+1}\right)  -f\left(
t+1,0,0\right)  +f\left(  t+1,0,0\right)  \right\vert ^{2}\right]  \\
&  \leq C\mathbb{E}\left[  \left\vert Y_{t+1}\right\vert ^{2}+\left\Vert
Z_{t+1}\right\Vert ^{2}+\left\vert f\left(  t+1,0,0\right)  \right\vert
^{2}\right]  .
\end{align*}
Since $Y_{T}=\eta$ and $f\left(  T,y,z\right)  $ is independent of $z$, we
obtain that for any $t\in\{0,1,...,T-1\}$,%
\[
\mathbb{E}\left[  \left\vert Y_{t}\right\vert ^{2}+\left\Vert Z_{t}\right\Vert
^{2}+\left\vert \Delta N_{t}\right\vert ^{2}\right]  \leq C\mathbb{E}\left[
\left\vert \eta\right\vert ^{2}+\sum_{s=1}^{T}\left\vert f\left(
s,0,0\right)  \right\vert ^{2}\right]
\]
by induction. Then the result can be proved obviously. This completes the proof.
\end{proof}

\bigskip


\begin{thebibliography}{99}                                                                                               %


\bibitem {a93}Antonelli, F. (1993). Backward-forward stochastic differential
equations. The Annals of Applied Probability, 777-793.

\bibitem {ac16}Allan, A. L., \& Cohen, S. N. (2016). Ergodic backward
stochastic difference equations. Stochastics, 88(8), 1207-1239.

\bibitem {bz08}Bender, C., \& Zhang, J. (2008). Time discretization and
Markovian iteration for coupled FBSDEs. The Annals of Applied Probability,
18(1), 143-177.

\bibitem {bcc14}Bielecki, T. R., Cialenco, I., \& Chen, T. (2014). Dynamic
conic finance via backward stochastic difference equations. Siam Journal on
Financial Mathematics, 6(1).

\bibitem {cs13}Cheridito, P., \& Stadje, M. (2013). Bs$\Delta$es and bsdes
with non-lipschitz drivers: comparison, convergence and robustness. Bernoulli,
19(3), 1047-1085.

\bibitem {ce10+}Cohen, S. N., \& Elliott, R. J. (2010). A general theory of
finite state backward stochastic difference equations. Stochastic Processes
and their Applications, 120(4), 442-466.

\bibitem {ce11}Cohen, S. N., \& Elliott, R. J. (2011). Backward stochastic
difference equations and nearly time-consistent nonlinear expectations. Siam
Journal on Control \& Optimization, 49(1), 125-139.

\bibitem {dm06}Delarue, F., \& Menozzi, S. (2006). A forward--backward
stochastic algorithm for quasi-linear PDEs. The Annals of Applied Probability,
16(1), 140-184.

\bibitem {dmy95}Duffie, D., Ma, J., \& Yong, J. (1995). Black's consol rate
conjecture. The Annals of Applied Probability, 356-382.

\bibitem {eh97}El Karoui, N., \& Huang, S. (1997). A general result of
existence and uniqueness of BSDE. Pitman Research Notes in Mathematics Series.
Harlow: Longman, 27-36.

\bibitem {esc15}Elliott, R. J., Siu, T. K., \& Cohen, S. N. (2015). Backward
stochastic difference equations for dynamic convex risk measures on a binomial
tree. Journal of Applied probability, 52(3), 771-785.

\bibitem {glw05}Gobet, E., Lemor, J., \& Warin, X. (2005). A regression-based
monte carlo method to solve backward stochastic differential equations. Annals
of Applied Probability, 15(3), 2172-2202.

\bibitem {gp14}Gobet, E., \& Pagliarani, S. (2014). Analytical approximations,
of bsdes with non-smooth driver. Ssrn Electronic Journal, 6(1).

\bibitem {hp95}Hu, Y., \& Peng, S. (1995). Solution of forward-backward
stochastic differential equations. Probability Theory and Related Fields,
103(2), 273-283.

\bibitem {lz15}Lin, X., \& Zhang, W. (2015). A maximum principle for optimal
control of discrete-time stochastic systems with multiplicative noise. IEEE
Transactions on Automatic Control, 60(4), 1121-1126.

\bibitem {mpmt02}Ma, J., Protter, P., San Martin, J., \& Torres, S. (2002).
Numberical method for backward stochastic differential equations. The Annals
of Applied Probability, 12(1), 302-316.

\bibitem {mpy94}Ma, J., Protter, P., \& Yong, J. (1994). Solving
forward-backward stochastic differential equations explicitly---a four step
scheme. Probability theory and related fields, 98(3), 339-359.

\bibitem {mwzz15}Ma, J., Wu, Z., Zhang, D., \& Zhang, J. (2015). On
well-posedness of forward--backward SDEs---A unified approach. The Annals of
Applied Probability, 25(4), 2168-2214.

\bibitem {my93}Ma, J., \& Yong, J. (1993). Solvability of forward-backward
SDEs and the nodal set of Hamilton-Jacobi-Bellman equations.

\bibitem {pt99}Pardoux, E., \& Tang, S. (1999). Forward-backward stochastic
differential equations and quasilinear parabolic PDEs. Probability Theory and
Related Fields, 114(2), 123-150.

\bibitem {pw99}Peng, S., \& Wu, Z. (1999). Fully coupled forward-backward
stochastic differential equations and applications to optimal control. SIAM
Journal on Control and Optimization, 37(3), 825-843.

\bibitem {y97}Yong, J. (1997). Finding adapted solutions of forward--backward
stochastic differential equations: method of continuation. Probability Theory
and Related Fields, 107(4), 537-572.

\bibitem {y10+}Yong, J. (2010). Forward-backward stochastic differential
equations with mixed initial-terminal conditions. Transactions of the American
Mathematical Society, 362(2), 1047-1096.

\bibitem {z04}Zhang, J. (2004). A numerical scheme for bsdes. Annals of
Applied Probability, 14(1), 459-488.
\end{thebibliography}
\end{document}